\documentclass[12pt]{article}
\usepackage{geometry}                
\geometry{letterpaper}                   
\usepackage{graphicx,color}
\usepackage{amssymb,amsmath,amsthm,mathrsfs}
\usepackage[all,cmtip]{xy}
\usepackage{epstopdf, comment}

\DeclareGraphicsRule{.tif}{png}{.png}{`convert #1 `dirname #1`/`basename #1 .tif`.png}
\linespread{1.2}

\numberwithin{equation}{section}

\newtheorem{theorem}{Theorem}[section]

\newtheorem{conjecture}[theorem]{Conjecture}
\newtheorem{lemma}[theorem]{Lemma}
\newtheorem{corollary}[theorem]{Corollary}

\theoremstyle{remark}
\newtheorem*{remark}{Remark}

\theoremstyle{definition}

\DeclareMathOperator{\BP}{BP}
\DeclareMathOperator{\MBP}{MBP}

\DeclareMathOperator{\BMO}{BMO}
\DeclareMathOperator{\GCE}{GCE}
\DeclareMathOperator{\loc}{loc}

\DeclareMathOperator{\inn}{Inn}
\DeclareMathOperator{\out}{Out}

\DeclareMathOperator{\aut}{Aut}
\DeclareMathOperator{\hol}{Hol}

\DeclareMathOperator{\crit}{crit}

\title{Critical structures of inner functions}
\author{Oleg Ivrii}
\date{November 16, 2020}

\begin{document}

\maketitle

\begin{abstract}
A celebrated theorem of M.~Heins says that up to post-composition with a M\"obius transformation, a finite Blaschke product is uniquely determined by its critical points.  K.~Dyakonov suggested that it may interesting to extend this result to infinite degree, however, one needs to be careful since inner functions may have identical critical sets.
 In this work, we try parametrizing inner functions by 1-generated invariant subspaces of the weighted Bergman space $A^2_1$. Our technique is based on the Liouville correspondence which provides a bridge between complex analysis and non-linear elliptic PDE.
\end{abstract}

\section{Introduction}

A finite Blaschke product $F(z)$ is a holomorphic self-map of the unit disk which  extends to a continuous dynamical system on the unit circle. 
The most common way to study Blaschke products is by examining their zero sets.
It is not difficult to show that a finite Blaschke product $F(z)$ is uniquely determined by its zero set up to a rotation:
$$
F(z) = e^{i\theta} \prod_{i=1}^d \frac{z-a_i}{1-\overline{a_i}z}, \qquad a_1, a_2, \dots, a_d \in \mathbb{D},
$$
where $d \ge 1$ is  the degree of $F$.
This approach allows one to factor zeros of bounded analytic functions and leads to Beurling's invariant subspace theorem, which is one of the cornerstones of modern complex analysis and function theory.

In this work, we follow a less traveled path of examining the {\em critical sets} of Blaschke products, initiated by M.~Heins \cite{heins} in 1962.

\begin{theorem}[Heins]
\label{heins-thm}
A finite Blaschke product is uniquely determined by the set of its critical points up to post-composition with a M\"obius transformation
$m \in \aut(\mathbb{D})$, and furthermore, any set of $d-1$ points in the unit disk arises as the critical set of some Blaschke product of degree $d$.
\end{theorem}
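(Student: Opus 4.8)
The plan is to recast the theorem through the Liouville correspondence, associating to a degree-$d$ Blaschke product $F$ the pullback $\sigma_F = F^*\rho$ of the hyperbolic metric $\rho = |dz|/(1-|z|^2)$ of curvature $-4$, explicitly $\sigma_F = \frac{|F'(z)|}{1-|F(z)|^2}|dz|$. Away from $\crit(F)$ the map $F$ is a local biholomorphism, so $\sigma_F$ is again a conformal metric of curvature $-4$; at a zero of $F'$ of order $k$ the factor $|F'|$ produces a conical singularity of angle $2\pi(k+1)$, and near $\partial\mathbb{D}$ the angular-derivative asymptotics $(1-|z|^2)/(1-|F|^2)\to 1/|F'(\zeta)|$ show $\sigma_F$ is asymptotic to $\rho$, hence complete. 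Thus $F$ determines a complete curvature-$(-4)$ cone metric on $\mathbb{D}$ whose cone points are exactly the critical points of $F$, the orders summing to $d-1$ by Riemann--Hurwitz.

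For uniqueness, suppose $F$ and $G$ have the same critical set with multiplicities, and write $\sigma_F = e^{u}|dz|$, $\sigma_G = e^{v}|dz|$, so that $\Delta u = 4e^{2u}$ and $\Delta v = 4e^{2v}$ off the cone points. The difference $w = u-v$ extends continuously across the cone points, since there the matching logarithmic singularities cancel, and $w\to 0$ along $\partial\mathbb{D}$ because both metrics are asymptotic to $\rho$. Applying the maximum principle to $\Delta w = 4(e^{2u}-e^{2v})$ on $\overline{\mathbb{D}}$ forces $w\equiv 0$, so $\sigma_F=\sigma_G$. Equality of the pullback metrics makes $G\circ F^{-1}$, defined locally away from the critical values, a holomorphic isometry of $(\mathbb{D},\rho)$, hence the restriction of some $m\in\aut(\mathbb{D})$; analytic continuation then gives $G = m\circ F$ on all of $\mathbb{D}$. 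Since post-composition by any $m\in\aut(\mathbb{D})$ leaves the zeros of $F'$ untouched, the critical set indeed determines $F$ exactly up to this ambiguity.

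For existence I would run the correspondence backwards. Given $c_1,\dots,c_{d-1}\in\mathbb{D}$ with multiplicities $k_j$ (so $\sum k_j = d-1$), I seek a complete curvature-$(-4)$ metric $e^{u}|dz|$ with a cone of angle $2\pi(k_j+1)$ at each $c_j$, i.e.\ a solution of the singular Liouville equation $\Delta u = 4e^{2u} + 2\pi\sum_j k_j\,\delta_{c_j}$ with the complete asymptotics $u\sim -\log(1-|z|^2)$. The developing map $F$ of such a metric --- the locally defined holomorphic map with $F^*\rho = e^{u}|dz|$ --- is then the desired Blaschke product: each cone angle being an integer multiple of $2\pi$ makes the monodromy around every $c_j$ trivial, so $F\colon\mathbb{D}\to\mathbb{D}$ is single-valued; completeness forces $F$ to send $\partial\mathbb{D}$ to $\partial\mathbb{D}$, hence to be a proper self-map, i.e.\ a finite Blaschke product; and Riemann--Hurwitz fixes its degree at $d$ from its $d-1$ critical points.

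The main obstacle is the existence half, specifically producing a solution of the singular Liouville equation with precisely the prescribed logarithmic singularities and the correct complete boundary behaviour --- enough control to guarantee that the developing map is proper of degree exactly $d$ rather than an infinite Blaschke product or a cover of some quotient. I expect this to rest on the sub/super-solution or variational theory for the Liouville equation, using barriers assembled from the explicit singular solutions $k_j\log|z-c_j|$ and from $\rho$ itself; the uniqueness argument above then simultaneously certifies that $F$ is unique up to $\aut(\mathbb{D})$. A more elementary route would instead show that the critical-set map from degree-$d$ Blaschke products modulo $\aut(\mathbb{D})$ to $\operatorname{Sym}^{d-1}(\mathbb{D})$ is a proper local homeomorphism between connected $2(d-1)$-manifolds; as $\operatorname{Sym}^{d-1}(\mathbb{D})$ is contractible this would upgrade to a global homeomorphism, yielding existence and uniqueness at once --- but there the properness of the map becomes the delicate point.
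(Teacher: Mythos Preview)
The paper does not prove Theorem~\ref{heins-thm}; it quotes it as Heins' classical result and only later reinterprets it through the Liouville correspondence (Section~1.4), remarking that ``finite Blaschke product'' corresponds to ``$u_F\to\infty$ as $|z|\to1$.'' So there is no in-paper proof to compare against. That said, your approach is precisely the conformal-metric viewpoint Heins introduced in \cite{heins}, and it meshes with the framework the paper adopts.

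Your uniqueness argument is essentially complete and correct. The key point you need (and state) is that for a finite Blaschke product the hyperbolic distortion $|F'|(1-|z|^2)/(1-|F|^2)$ tends to $1$ on $\partial\mathbb{D}$, so $w=u-v$ extends continuously by $0$ to the closed disk; then the comparison principle for $\Delta w = 4(e^{2u}-e^{2v})$ gives $w\equiv 0$. The passage from $\sigma_F=\sigma_G$ to $G=m\circ F$ via a local isometry of $(\mathbb{D},\rho)$ is standard.

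For existence you correctly isolate the real difficulty and leave it open. Two comments. First, Heins' own proof is closer to your second route: he shows the critical-set map is a proper local homeomorphism from $\{\text{degree-}d\text{ Blaschke products}\}/\aut(\mathbb{D})$ onto $\mathbb{D}^{d-1}/S_{d-1}$, and properness is exactly where the work lies (one must show that if the critical set stays in a compact part of $\mathbb{D}$ then, after normalization, so do the zeros). Second, the PDE route you sketch does work, but the sub/super-solution step needs care: a convenient supersolution is $\log\lambda_{\mathbb{D}} + \sum k_j\log\bigl|\tfrac{z-c_j}{1-\bar c_j z}\bigr|$, while subsolutions can be built from Perron families or from the solutions on subdisks; completeness of the resulting metric (hence properness of the developing map) then follows by comparing with $\lambda_{\mathbb{D}}$ near $\partial\mathbb{D}$. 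As written, your existence half is a reasonable outline but not yet a proof.
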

 
Loosely speaking, an {\em inner function} is a holomorphic self-map of the unit disk which extends to a measure-theoretic dynamical system of the unit circle. More precisely, 
we want the radial boundary values to exist almost everywhere and have absolute value one. 

If one wants to generalize Heins' result to inner functions, one is confronted with the following obstacle:
different inner functions can have the same critical set. For example, $$F_1(z) = z, \qquad F_2(z) =
\exp \biggl (\frac{z+1}{z-1} \biggr )$$ have no critical points.
 In order to distinguish $F_1$ and $F_2$, one must record some additional information.
 In \cite{inner}, the author explained how to parametrize inner functions of finite entropy (with derivative in the Nevanlinna class), answering a question posed by K.~Dyakonov \cite{dyakonov-mobius, dyakonov-inner}:

\begin{theorem}
\label{main-thm}
Let $\mathscr J$ be the set of inner functions whose derivative lies in the Nevanlinna class. The natural map $$F \to \inn(F') \quad : \quad \mathscr J/\aut(\mathbb{D}) \to \inn/ \, \mathbb{S}^1$$
is injective. The image consists of all inner functions of the form $BS_\mu$ where $B$ is a Blaschke product and $S_\mu$ is the singular factor associated to a measure $\mu$ whose support is contained in a countable union of Beurling-Carleson sets.
\end{theorem}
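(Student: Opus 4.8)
The plan is to realize the map $F \mapsto \inn(F')$ through the \emph{Liouville correspondence}: to an inner function $F$ I attach the pullback of the hyperbolic metric,
$$\lambda_F(z)\,|dz| \;=\; \frac{2|F'(z)|}{1-|F(z)|^2}\,|dz|,$$
which is invariant under post-composition, $\lambda_{m\circ F}=\lambda_F$ for $m\in\aut(\mathbb{D})$, because the target metric is $\aut(\mathbb{D})$-invariant. First I would check that the map descends to the quotient: writing $m(w)=e^{i\alpha}\frac{w-a}{1-\bar a w}$ gives $(m\circ F)'=m'(F)\cdot F'$ with $m'(F)=e^{i\alpha}\frac{1-|a|^2}{(1-\bar a F)^2}$ bounded and zero-free on $\mathbb{D}$ (since $|\bar a F|\le|a|<1$), hence outer up to a unimodular constant, so $\inn((m\circ F)')=\inn(F')$ in $\inn/\mathbb{S}^1$.

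For injectivity I would argue in two steps. Setting $u=\log\lambda_F$, a direct computation gives $\Delta\log(1-|F|^2)=-\lambda_F^2$ and $\Delta\log|F'|=2\pi\sum_c m_c\,\delta_c$, summed over the critical points $c$ with multiplicities $m_c$ (the zeros of the Blaschke factor of $\inn(F')$), so that $u$ solves the singular Liouville equation
$$\Delta u \;=\; e^{2u} \;+\; 2\pi\sum_c m_c\,\delta_c \qquad\text{on }\mathbb{D}.$$
The conical singularities of $e^u|dz|$ recover the Blaschke factor of $\inn(F')$, while the rate at which $u\to+\infty$ as $z\to\partial\mathbb{D}$ is depressed near $\supp\mu_{F'}$ recovers the singular measure. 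The claim is that this singular data determines $\lambda_F$ uniquely: if $u_1,u_2$ are two solutions with the same interior singularities and the same boundary asymptotics, then $v=u_1-u_2$ is bounded (the logarithmic poles cancel) and on $\{v>0\}$ satisfies $\Delta v=e^{2u_1}-e^{2u_2}\ge 0$ with $v\to 0$ at $\partial\mathbb{D}$, so the maximum principle forces $v\le 0$, and symmetrically $v\ge 0$, whence $u_1=u_2$. Finally, two holomorphic maps pulling the hyperbolic metric back to the same metric differ by a hyperbolic isometry — the standard uniqueness statement for developing maps of a curvature $-1$ metric — so $F$ is determined by $\lambda_F$ up to post-composition with $\aut(\mathbb{D})$. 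Combining the two steps gives injectivity.

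For the image I would treat the two inclusions separately. The necessity direction — that for $F\in\mathscr J$ the measure $\mu_{F'}$ is carried by a countable union of Beurling-Carleson sets — I would deduce from the structure of singular factors of Nevanlinna-class functions together with the Bergman-space factorization theory of Korenblum and Roberts, for which the Beurling-Carleson condition is precisely the obstruction: a singular inner function occurs as the inner part of a derivative exactly when it generates an admissible invariant subspace of $A^2_1$, and this is the support condition. The sufficiency direction is constructive: given a Blaschke product $B$ and a measure $\mu$ supported on $\bigcup_n E_n$ with each $E_n$ Beurling-Carleson, I would prescribe the conical singularities dictated by the zeros of $B$ and the boundary behaviour dictated by $\mu$, solve the singular Liouville equation above to obtain a complete conformal metric of curvature $-1$, and take its developing map $F$; by construction $\inn(F')=BS_\mu$, and the Beurling-Carleson hypothesis is exactly the integrability needed to keep $F'$ in the Nevanlinna class.

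The main obstacle is the sufficiency step. Proving existence of a solution to the singular Liouville problem with the prescribed, possibly wild, boundary defect — and then verifying that the resulting developing map is genuinely inner with derivative of bounded characteristic — is where the real work lies; the Beurling-Carleson hypothesis must be used quantitatively, since it is simultaneously the condition that makes the PDE solvable and the condition that controls the entropy $\int_{\partial\mathbb{D}}\log|F'|$ keeping $F'$ in the Nevanlinna class. I would expect the delicate point to be passing from a single Beurling-Carleson set to a countable union while retaining the uniform estimates that keep the metric complete and the characteristic finite.
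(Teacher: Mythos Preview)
The paper does not actually prove Theorem~\ref{main-thm}; it is quoted from the author's earlier work \cite{inner}. The only indication of method given here is the remark in Section~\ref{sec:gce-connections} that under the Liouville correspondence an inner function of finite entropy corresponds to a \emph{nearly-maximal solution} of the Gauss curvature equation, i.e.\ one satisfying
\[
\limsup_{r\to 1}\int_{|z|=r}(u_{\mathbb D}-u)\,d\theta<\infty.
\]
Your proposal is broadly aligned with this: you work through $\lambda_F$ and the singular Liouville equation, which is exactly the bridge the paper advertises.

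That said, your injectivity argument has a real gap. You assert that if $\inn(F_1')=\inn(F_2')$ then $u_1-u_2$ is bounded and tends to $0$ at $\partial\mathbb D$, and then apply the maximum principle. But $u_j=\log|F_j'|-\log\frac{1-|F_j|^2}{2}$ also involves the \emph{outer} part of $F_j'$, and you have not explained why equal inner parts force equal boundary behaviour of $u_j$. This is precisely where the Nevanlinna-class hypothesis and the nearly-maximal condition above enter: finiteness of the integral means $u_{\mathbb D}-u_F$ has a harmonic majorant, which lets one make sense of boundary values and show that the ``defect'' $u_{\mathbb D}-u_F$ is determined by $\inn(F')$. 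Without this step, two functions with the same $\inn(F')$ but different outer factors of $F'$ are not obviously distinguished by your argument. Your claim that ``the rate at which $u\to+\infty$ is depressed near $\supp\mu_{F'}$ recovers the singular measure'' is the right intuition, but it needs the nearly-maximal framework to be made rigorous.

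For the image description you correctly identify the Korenblum--Roberts theory as the source of the Beurling--Carleson condition and correctly flag the sufficiency step as the hard part; that is indeed where the work in \cite{inner} lies, and your honest acknowledgement that the passage from one Beurling--Carleson set to a countable union is delicate is accurate.
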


 By definition, a {\em Beurling-Carleson set} $E \subset \partial \mathbb{D}$ is a closed subset of the unit circle of zero Lebesgue measure whose complement is a union of arcs $\bigcup_k I_k$ with $$\kappa(E) \, = \, \sum |I_k| \log \frac{1}{|I_k|} \, < \,  \infty.$$

\subsection{Beurling's theorem}

It would be desirable to parametrize all inner functions by their critical structure, not just the relatively small subset $\mathscr J$. To understand how this might be done, let us see how one can parametrize {\em zero structures} of inner functions. 
The following statement expresses the fact that zero sets of functions in the Hardy space $H^2$ are Blaschke sequences:
\begin{equation*}
 \BP / \, \mathbb{S}^1 = \{\text{zero-based subspaces of }H^2\}.
\end{equation*}
By definition, a (closed) subspace $X \subset H^2$ is {\em zero-based} if it is defined as the collection of functions in $H^2$ that vanish at a prescribed set of points.
Taking the ``closure'' of the above statement, we arrive at a famous theorem of Beurling:

\begin{theorem}[A.~Beurling, 1949]
\begin{align*}
 \inn / \, \mathbb{S}^1 & =\overline{ \{\text{\em zero-based subspaces of }H^2\}} \\
 & = \{\text{\em invariant subspaces of }H^2\}.
\end{align*}
\end{theorem}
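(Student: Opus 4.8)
The plan is to establish that $\Theta \mapsto \Theta H^2$ defines a bijection from $\inn/\,\mathbb{S}^1$ onto the collection of nonzero closed subspaces $M \subset H^2$ with $zM \subset M$, and then to recognize these as the closures of zero-based subspaces. One inclusion is immediate: if $\Theta$ is inner then $|\Theta| = 1$ a.e.\ on $\partial\mathbb{D}$, so multiplication by $\Theta$ is an isometry of $H^2$; hence $\Theta H^2$ is closed and manifestly invariant under the shift. The substance of the theorem is the converse, namely that \emph{every} nonzero invariant subspace arises this way, and I would prove it by exploiting the structure of the shift operator.

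First I would produce a generator. Given a nonzero invariant $M$, the image $zM$ is a proper closed subspace of $M$: it is proper because a function in $\bigcap_n z^n H^2$ would vanish to infinite order at the origin and so be zero, whence $M \neq zM$. Thus the \emph{wandering subspace} $W := M \ominus zM$ is nonzero, and I pick a unit vector $\Theta \in W$. The crux is then a boundary Fourier computation. Since $z^n\Theta \in zM$ for every $n \geq 1$ while $\Theta \perp zM$, we get $\langle \Theta, z^n\Theta\rangle = 0$ for all $n \neq 0$; this says precisely that every nonzero Fourier coefficient of the boundary function $|\Theta|^2$ vanishes, so $|\Theta|^2 \equiv \|\Theta\|^2 = 1$ a.e.\ and $\Theta$ is inner. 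Running the same bilinear argument on a second vector $g \in W$ orthogonal to $\Theta$ forces all Fourier coefficients of $g\overline{\Theta}$ to vanish, so $g = 0$; hence $\dim W = 1$ and $W = \mathbb{C}\Theta$.

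Next I would reconstruct $M$ from its generator. Multiplying the orthogonal splitting $M = W \oplus zM$ by powers of the isometry $z$ and iterating yields $M = \bigoplus_{k=0}^{n} z^k W \oplus z^{n+1}M$; letting $n \to \infty$ and invoking $\bigcap_n z^n M \subset \bigcap_n z^n H^2 = \{0\}$ gives $M = \overline{\bigoplus_{k \geq 0} z^k\Theta} = \Theta H^2$. Uniqueness up to $\mathbb{S}^1$ follows since $\Theta_1 H^2 = \Theta_2 H^2$ makes both $\Theta_1/\Theta_2$ and its reciprocal $H^2$-functions with unimodular boundary values, hence inner; an inner function whose inverse is also inner is a unimodular constant.

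Finally, for the closure clause I would use the inner factorization $\Theta = B S_\mu$ into a Blaschke product and a singular inner factor. Zero-based subspaces are exactly the spaces $B H^2$ attached to Blaschke products $B$, so a general $\Theta H^2$ is a limit of zero-based subspaces obtained by approximating the singular part $S_\mu$ by finite Blaschke products; this is the content of ``taking the closure'' in the statement. I expect the main obstacle to be the pair of facts in the second step — that the wandering subspace is one–dimensional and that its generator is inner — which together carry the whole argument; once these are in hand, the reconstruction, uniqueness, and approximation are essentially bookkeeping with the isometric shift and the standard factorization theory.
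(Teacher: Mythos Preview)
Your argument is the standard wandering-subspace proof of Beurling's theorem and is correct; note, however, that the paper does not supply its own proof of this statement --- it is quoted as a classical 1949 result of Beurling and used only as motivation --- so there is nothing in the paper to compare against.

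One small remark on the final ``closure'' clause: you assert that a general $\Theta H^2$ is a limit of zero-based subspaces by ``approximating the singular part $S_\mu$ by finite Blaschke products,'' but you do not say how. The cleanest justification is Frostman's theorem: for almost every $a \in \mathbb{D}$ the M\"obius shift $(\Theta - a)/(1 - \overline{a}\Theta)$ is a Blaschke product, and letting $a \to 0$ through such values gives inner functions $B_a$ with $B_a \to \Theta$ uniformly on compacta and in $H^2$, from which $B_a H^2 \to \Theta H^2$ in the subspace topology the paper describes. You should also remark, for the reverse inclusion, that a limit of invariant subspaces is invariant (immediate from continuity of the shift), so the closure of the zero-based subspaces stays inside the invariant ones. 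With these two sentences added, the argument is complete.
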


The collection of closed subspaces of a Banach space carries a natural topology where  $X_n \to X$  if  for any convergent sequence $x_n \to x$ with $x_n \in X_n$, the limit $x \in X$, and conversely, any $x \in X$ can be approximated by a convergent sequence $x_n \to x$ with $x_n \in X_n$.  The process of taking closure has been given the beautiful name {\em asymptotic spectral synthesis} by N.~K.~Nikol'skii \cite[p.~34]{nikolskii}.

\subsection{Critical structures of inner functions}

Let $(H^2)'$ denote the space of  derivatives of $H^2$ functions. According to the Littlewood-Paley identity,
\begin{equation}
\label{eq:lp}
\| f \|^2_{H^2} = |f(0)|^2 + \frac{1}{2\pi} \int_{\mathbb{D}} |f'(z)|^2 \log\frac{1}{|z|} \, |dz|^2,
\end{equation}
one can identify $(H^2)'$  with the {\em  weighted Bergman space} $A_1 ^2$ which is the collection of all holomorphic function on the unit disk that satisfy
 \begin{equation*}
 \|f\|_{A_1^2} = \biggl (  \int_{\mathbb{D}} |f(z)|^2  (1-|z|) |dz|^2 \biggr)^{1/2} < \infty.
\end{equation*}
The starting point for our discussion is a beautiful result of Kraus \cite{kraus} which says that critical sets of Blaschke products coincide with critical sets of $H^2$ functions.
 One can tautologically rewrite Kraus' result as
$$
 \MBP / \, \aut(\mathbb{D}) = \{\text{zero-based subspaces of }A^2_1\}.
$$
The exact definition of the class of {\em maximal Blaschke products} will be given later. At the current moment, we only need to know that if $C$ is a critical set of some Blaschke product, then the  maximal Blaschke product with critical set $C$ is a particular Blaschke product with that critical set. 

For a function $H \in A^{2}_1$,  the {\em subspace generated by $H$} is defined as the minimal closed subspace of $A^2_1$ which contains $H$ and is invariant under multiplication by $z$:
$$
[H] = \overline{\{ Hp : p \text{ polynomial}\}}.
$$
In an important paper, S.~Shimorin \cite{shimorin} showed that the closure of the zero-based subspaces in $A^2_1$ consists of subspaces that can be generated by a single function. In light of Shimorin's result, we can try to write down the ``closure'' of Kraus' theorem:

\begin{conjecture}
\label{main-conjecture}
If an invariant subspace of $A^2_1$ can be generated by a single function, it can be generated by the derivative of an essentially unique inner function.
\end{conjecture}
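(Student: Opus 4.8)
The plan is to route everything through the Liouville correspondence, which attaches to a holomorphic self-map $F$ of the disk the pullback of the hyperbolic metric,
$$
\lambda_F \,=\, \frac{2\,|F'|}{1-|F|^2}\,|dz|.
$$
This is a conformal metric of constant curvature $-1$ away from the critical set $\crit F$ (the zero set of $F'$), with a conical singularity of angle $2\pi(m_j+1)$ at each critical point $c_j$ of multiplicity $m_j$; equivalently, $u=\log\lambda_F$ solves the Gauss curvature equation $\Delta u = e^{2u}$ on $\mathbb{D}\setminus\crit F$ with the prescribed logarithmic singularities $u(z)=m_j\log|z-c_j|+O(1)$. Since $m^*\lambda_{\mathbb{D}}=\lambda_{\mathbb{D}}$ for $m\in\aut(\mathbb{D})$, the metric is invariant under post-composition, $\lambda_{m\circ F}=\lambda_F$, so $\lambda_F$ is the natural candidate for the $\aut(\mathbb{D})$-invariant carried by the subspace. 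The conjecture then splits into two assertions: that the generated subspace $[F']$ remembers exactly the metric $\lambda_F$, and that $\lambda_F$ remembers $F$ up to $\aut(\mathbb{D})$.

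The second assertion is the rigidity half of the Liouville correspondence and should be the cleaner of the two: a conformal metric of constant curvature $-1$ on a simply connected surface has a developing map into the hyperbolic disk, unique up to an orientation-preserving isometry, and the cone points are intrinsic to the metric, so they match automatically. Hence once the metric is recovered one gets $F_2=m\circ F_1$ with $m\in\aut(\mathbb{D})$, which is precisely the claimed essential uniqueness. For the first assertion I would use the structure theory behind Shimorin's theorem: a singly-generated invariant subspace $M=[H]$ has a canonical expansive (Bergman-inner) generator $G$, determined by $M$ up to a unimodular constant. Its zero set recovers the critical divisor, hence the location and angles of the conical singularities; the remaining datum — which solution of the curvature equation with these singularities we are looking at — is the analog of a singular inner factor, recorded by the position of $M$ relative to the zero-based subspace with the same zeros, or equivalently by the reproducing kernel of $M$. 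Together these should pin down $\lambda_F$, and it is here that the subspace sees strictly more than the bare critical set: the maps $z$ and $\exp\bigl(\tfrac{z+1}{z-1}\bigr)$ share the empty critical set but generate the distinct subspaces $A^2_1$ and a proper subspace carrying the singular mass at $1$, and it is the generator, not just its zeros, that separates them.

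For existence I would run the closure procedure suggested by the chain of results in the introduction. By Shimorin's theorem the given subspace $M$ is a limit, in the asymptotic spectral synthesis topology, of zero-based subspaces $M_n$ with finite critical sets $C_n$; Kraus' theorem supplies for each $C_n$ the maximal Blaschke product $B_n$ with $M_n=[B_n']$. The family $\{B_n\}$ is normal, so after passing to a subsequence $B_n\to F$ locally uniformly to a self-map $F$, and on the geometric side the complete curvature-$(-1)$ metrics $\lambda_{B_n}$ converge to $\lambda_F$ away from the critical set. It then remains to show that $F$ is inner and that $[F']=M$, the latter following once the spectral synthesis convergence $M_n\to M$ is matched with the geometric convergence and the generator identification above.

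The main obstacle is showing that the limit $F$ is genuinely inner rather than a self-map with a nontrivial outer part, equivalently that its radial boundary values have modulus one almost everywhere, i.e. that the conformal factor of $\lambda_{B_n}$ does not collapse on a boundary set of positive measure in the limit. In the finite-entropy regime of Theorem \ref{main-thm} this is exactly what the Beurling--Carleson condition on the singular support buys, but for a general singly-generated subspace that a priori bound is gone, and the crux is to prove that convergence of subspaces is strong enough to control the metrics $\lambda_{B_n}$ uniformly in a boundary collar. I expect this to demand a compactness theorem for the Gauss curvature equation $\Delta u=e^{2u}$ with the singularities allowed to escape to $\partial\mathbb{D}$, keeping the conformal factor from degenerating so that $|F|\to 1$ radially almost everywhere. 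The uniqueness direction, by contrast, I expect to settle cleanly once the generator is shown to determine the metric.
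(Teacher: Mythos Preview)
First, note that the statement is a \emph{conjecture} in the paper: the author does not claim a full proof. What the paper does prove is the uniqueness half and a partial existence result (the subspace is generated by the derivative of a bounded self-map, and contains the derivative of an inner function), and then reduces the remaining gap to a separate PDE conjecture. So your proposal should be measured against that partial argument, not against a complete proof.

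Your approach and the paper's diverge substantially on the existence side. You propose to run Shimorin's approximation by zero-based subspaces, pick up the maximal Blaschke products $B_n$ from Kraus, pass to a normal-family limit $F$, and then argue that $F$ is inner and $[F']=M$. The paper instead works directly with a fixed generator $H$ and studies the \emph{weighted} equation $\Delta u = |H|^2 e^{2u}$: the increasing limit $u_{H,\infty}$ of the solutions with constant boundary data $n$ yields, via Liouville, a self-map $I_H$ with $I_H'\in[H]$, and a short comparison argument (bootstrapping $u_{Hp_k,n}$ against a supersolution built from $I_H$) forces $I_H$ to be inner. In other words, the paper \emph{solves} exactly the obstacle you flag as the crux --- inner-ness of the candidate map --- by avoiding the limiting procedure altogether and exploiting the PDE monotonicity. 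The price is that the paper's remaining gap sits elsewhere: one knows $I_H'\in[H]$ but not that $I_H'$ generates $[H]$, and this is what is reduced to the approximation conjecture on bounded solutions.

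Your sketch also has a second soft spot that you pass over: even granting that the limit $F$ is inner, matching the spectral-synthesis convergence $M_n\to M$ with the normal-family convergence $B_n\to F$ to conclude $[F']=M$ is not automatic, since $B_n'\to F'$ only locally uniformly, not in $A^2_1$. This is essentially the same difficulty the paper isolates in Lemma~\ref{lem33} (where only weak containment survives) and is the reason the full conjecture remains open there as well.

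On uniqueness your instinct is right and close to the paper's argument, but the paper's mechanism is more concrete than the Bergman-inner/reproducing-kernel route you outline: the canonical solution $u_{H,\infty}$ is shown to depend only on $[H]$ (because $[H_1]\subset[H_2]$ forces a pointwise inequality of hyperbolic densities), so $\lambda_{I_{H_1}}=\lambda_{I_{H_2}}$ and Liouville gives $I_{H_1}=m\circ I_{H_2}$. Your version leaves unexplained why the extremal generator of $M$ should pin down the metric rather than merely the critical divisor plus some auxiliary data.
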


While we are not able to fully resolve this conjecture, we can show that any 1-generated invariant subspace of $A^2_1$ can be generated by the derivative of a bounded function. Previously, it was known that the generator could be chosen to be the derivative of a $\BMO$ function, see \cite[Theorem 3.3]{HKZ}. 

\subsection{Liouville's theorem}
\label{sec:gce-connections}

Given a conformal pseudometric $\lambda(z)|dz|$ on the unit disk with an upper semicontinuous density, its {\em Gaussian curvature} is given by
$$
k_\lambda
=
 - \frac{\Delta \log \lambda}{\lambda^2},
 $$
where the Laplacian is taken in the sense of distributions.  It is well known that the Poincar\'e metric $\lambda_{\mathbb{D}}(z) = \frac{2}{1-|z|^2}$ has constant curvature $-1$, the Euclidean metric $\lambda_{\mathbb{C}}(z) = 1$ is flat, while the spherical metric $\lambda_{\widehat{\mathbb{C}}}(z) = 1$ has constant curvature $+1$. 

For a holomorphic self-map  of the unit disk $F \in \hol(\mathbb{D}, \mathbb{D})$, consider
the pullback
 $$
 \lambda_{F} := F^*\lambda_{\mathbb{D}} = \frac{2|F'|}{1-|F|^2}.
 $$
  Since curvature is a conformal invariant,
$
k_{\lambda_F} = -1
$
on $\mathbb{D} \setminus \crit(F)$ where  $\crit(F)$ denotes the critical set of $F$. However, on the critical set, $\lambda_F$ = 0 while its curvature has $\delta$-masses. Introducing the change of variables $u_F = \log \lambda_F$, we naturally arrive at the PDE
\begin{equation}
\label{eq:basic-gce}
   \Delta u = e^{2u} + 2\pi  \nu, \qquad  \nu \ge 0,
   \end{equation}
   where
   $ \nu = \sum_{c \in \crit(F)} \delta_{c}
   $ is an integral sum of point masses.
   
A theorem of Liouville  \cite[Theorem 5.1]{conf-metrics} states that the correspondence $F \to u_F$ is a bijection between
$$
\hol(\mathbb{D}, \mathbb{D})\,/\aut(\mathbb{D}) \quad \Longleftrightarrow \quad \bigl \{ \text{solutions of (\ref{eq:basic-gce}) with } \nu \text{ integral} \bigr \}.
$$
Liouville's theorem allows one to interpret properties of holomorphic self-maps of the disk as properties of solutions of the Gauss curvature equation. For example, Heins theorem states that a ``finite Blaschke product'' is a holomorphic self-map of the disk for which ``$u_F \to \infty$ as $|z| \to 1$'' while Theorem \ref{main-thm} states that an ``inner function of finite entropy''
corresponds to a ``{\em nearly-maximal solution} of the Gauss curvature equation'' for which
\begin{equation}
\label{eq:nm}
\limsup_{r \to 1} \int_{|z|=r} (u_{\mathbb{D}} - u) d\theta < \infty.
\end{equation}

\subsection{Weighted Gauss Curvature Equation}

In the present work, we will be mostly working with the weighted Gauss curvature equation $\Delta u = |H|^2 e^{2u}$  where $H \in A^2_1(\mathbb{D})$, introduced by Kraus \cite{kraus}.
At first glance, it seems that little is gained by working with $\GCE_H$ since it is equivalent to the usual Gauss curvature equation: $u$ is a solution of $\Delta u = |H|^2 e^{2u}$ if and only if
$v = u + \log |H|$ is a solution of 
\begin{equation}
\label{eq:GCE}
\Delta v = e^{2v} + 2\pi \sum_{c \in \mathcal Z(H)} \delta_c,
\end{equation}
where $\mathcal Z(H)$ denotes the zero set of $H$ counted with multiplicity. 
Nevertheless, it turns out that $\GCE_H$ is easier to work with than (\ref{eq:GCE}) since it features a holomorphic function instead of a measure.

We now give a brief summary of the argument in \cite{kraus} that the zero set of a function $H \in A^2_1(\mathbb{D})$ is a critical set of some Blaschke product (the other direction is trivial). Kraus first noticed that $\GCE_H$ has at least one solution, see Theorem \ref{kraus-PDE} below. In this case, $\GCE_H$ has a maximal solution $u_{H, \max}$ which dominates all other solutions pointwise.
Liouville's theorem implies that
$$
u_{H, \max}(z) = \log \frac{1}{|H(z)|} \cdot \frac{2|F'(z)|}{1-|F(z)|^2}
$$
for some holomorphic function $F: \mathbb{D} \to \mathbb{D}$, whose critical set contains the zero set of $H$. We can decompose $F = BSO$ into a product of a Blaschke factor, a singular inner factor and an outer factor. In \cite{kraus}, Kraus gave a very elegant argument that uses the maximality of $u_{H, \max}$ to rule out the existence of non-trivial singular and outer factors. Incidentally, $F$ is the maximal Blaschke product with critical set $C$ alluded to earlier.

In this paper, we will give another proof of Kraus' theorem based on the notion of the {\em canonical solution} of $\GCE_H$. The advantage of the canonical solution over the maximal solution is that it records more information about $H$ than its critical set. As the reader may guess, the canonical solution depends on the invariant subspace $[H] \subset A^2_1(\mathbb{D})$.

\section{Background in PDE}

In this section, we study the weighted Gaussian curvature equation ($\GCE_H$)
\begin{equation}
\label{eq:generalized-GCE2a}
   \left\{\begin{array}{lr}
        \Delta u  =  |H|^2 e^{2u} &  \text{in } \mathbb{D}, \\
       u = h, &  \text{on } \partial \mathbb{D},
        \end{array}\right.
\end{equation}
for $H \in A^2_1(\mathbb{D}) \setminus \{0\}$.
For a function $u$ to be considered a {\em solution}, we require:

\begin{enumerate}
\item For any $\phi \in C_c^\infty(\mathbb{D})$,
\begin{equation}
\label{eq:fbe-weak}
\int_{\mathbb{D}}  u \Delta \phi \, |dz|^2 =  \int_{\mathbb{D}}  e^{2u} \phi \, |dz|^2.
\end{equation}

\item As $r \to 1$, the measures
$u(re^{i\theta}) d\theta \to h \, d\theta$ converge in the weak-$*$ topology. (Unless otherwise specified, we interpret boundary values in this way.)
\end{enumerate}

By analogy with subharmonic functions, we say that $u$ is a {\em subsolution} if $\Delta u \ge |H|^2e^{2u}$ in the sense of distributions and a {\em supersolution}  if $\Delta u \le |H|^2e^{2u}$.

\begin{theorem}
\label{kraus-PDE}
For $H \in A^2_1(\mathbb{D})$, (\ref{eq:generalized-GCE2a}) 
admits a unique solution for any $h \in L^\infty(\partial \mathbb{D})$. 
The solution is increasing in $h$: if $h_1 \le h_2$, then $u_1 \le u_2$.
\end{theorem}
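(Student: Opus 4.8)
The plan is to realize the solution as the unique function trapped between a canonical supersolution and a canonical subsolution, both attaining the boundary data $h$ in the weak-$*$ sense, and then to extract uniqueness and monotonicity from a comparison principle adapted to that weak-$*$ boundary condition. Throughout I write $P[h]$ for the harmonic (Poisson) extension of $h$ and $G\mu$ for the Green potential of a measure $\mu$ on $\mathbb{D}$.

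\textbf{Barriers.} The harmonic extension $\bar u = P[h]$ is bounded by $\|h\|_\infty$ and satisfies $\Delta \bar u = 0 \le |H|^2 e^{2\bar u}$, so it is a supersolution attaining $h$. For the subsolution, set $d\mu = |H|^2 e^{2\bar u}\,|dz|^2$ and let $\underline u = \bar u - G\mu$. Because $\bar u$ is bounded, $\int_{\mathbb{D}}(1-|w|)\,d\mu(w) \le C\,\|H\|_{A^2_1}^2 < \infty$, and this is precisely the hypothesis that guarantees $G\mu$ is a finite potential with vanishing weak-$*$ boundary trace; this integrability is essentially the only place where the $A^2_1$ condition enters. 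One then checks $\Delta \underline u = \mu \ge |H|^2 e^{2\underline u}$ (using $\underline u \le \bar u$, which itself follows since $\underline u - \bar u = -G\mu \le 0$), so $\underline u$ is a subsolution, and $\underline u$ attains $h$ since $G\mu$ contributes nothing to the boundary trace.

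\textbf{Existence.} Since the nonlinearity $t \mapsto |H|^2 e^{2t}$ is increasing, I would run the monotone iteration $u_0 = \bar u$ and $u_{n+1} = \bar u - G\bigl(|H|^2 e^{2u_n}\,|dz|^2\bigr)$, i.e.\ $\Delta u_{n+1} = |H|^2 e^{2u_n}$ with boundary data $h$. An induction using $\Delta \bar u = 0$ and the monotonicity of the nonlinearity gives $\underline u = u_1 \le u_2 \le \cdots \le \bar u$; the increasing, uniformly bounded sequence converges pointwise to a limit $u$, and since $|H|^2 e^{2u_n} \le |H|^2 e^{2\bar u}$ is integrable against $(1-|w|)\,|dw|^2$, dominated convergence passes to the limit in the integral equation $u = \bar u - G(|H|^2 e^{2u}\,|dz|^2)$, so $u$ solves $\GCE_H$. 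The sandwich $\underline u \le u \le \bar u$ forces $u$ to attain $h$.

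\textbf{A priori location and comparison.} First I would confine every solution to the band $[\underline u,\bar u]$: a solution is subharmonic, hence $u \le P[h] = \bar u$ by weak-$*$ convergence tested against the Poisson kernel, and then $\Delta(u-\underline u) = |H|^2(e^{2u}-e^{2\bar u}) \le 0$ together with zero boundary trace forces $u \ge \underline u$. Now let $\underline v,\bar v$ be a subsolution and a supersolution lying in this band with boundary traces $h_1 \le h_2$. On $\{\underline v > \bar v\}$ the difference is subharmonic, so $w^+ = (\underline v - \bar v)^+$ is a nonnegative subharmonic function on $\mathbb{D}$; moreover $0 \le w^+ \le \bar u - \underline u = G\mu$, whose boundary trace vanishes, so $\int_{|z|=r} w^+\,d\theta \to 0$. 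As this circular average is nondecreasing in $r$, it vanishes identically, giving $\underline v \le \bar v$. Applied to two solutions with the same data this yields uniqueness, and applied with ordered data it yields monotonicity of the solution in $h$. I expect the principal obstacle to be exactly this interface between the maximum principle and the merely weak-$*$ boundary prescription: the device that resolves it is the a priori confinement to $[\underline u,\bar u]$, since the two barriers differ by the single Green potential $G\mu$ whose vanishing boundary trace lets one squeeze $w^+$ and run the comparison with honest boundary control. Verifying the boundary regularity of $G\mu$ and the stability of the iteration under pointwise monotone limits are the technical points demanding care.
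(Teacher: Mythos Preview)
Your comparison and uniqueness argument is essentially the paper's: both use Kato's inequality to see that $(u_1-u_2)^+$ is subharmonic with nonpositive boundary trace, hence vanishes. Your additional band-confinement step, bounding $(u_1-u_2)^+$ above by a Green potential with vanishing trace, is a clean way to justify that the weak-$*$ boundary value of the positive part really is zero, a point the paper states without elaboration.

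The existence argument, however, has a genuine gap. The map
\[
T:\ v \longmapsto P[h] - G\bigl(|H|^2 e^{2v}\,|dz|^2\bigr)
\]
is \emph{antitone}, not monotone: if $v_1 \le v_2$ then $|H|^2 e^{2v_1} \le |H|^2 e^{2v_2}$, the Green potential increases, and $Tv_1 \ge Tv_2$. Starting from $u_0=\bar u$ one gets $u_0 \ge u_1$, then indeed $u_1 \le u_2$, but at the next step $u_3 - u_2 = G\bigl(|H|^2(e^{2u_1}-e^{2u_2})\bigr) \le 0$, so $u_3 \le u_2$. The sequence oscillates: the even iterates decrease, the odd iterates increase, and your claimed chain $u_1 \le u_2 \le u_3 \le \cdots$ fails. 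One can pass to limits $u_{\mathrm{odd}} \le u_{\mathrm{even}}$ along the two monotone subsequences, but these satisfy only the coupled system $\Delta u_{\mathrm{even}} = |H|^2 e^{2u_{\mathrm{odd}}}$, $\Delta u_{\mathrm{odd}} = |H|^2 e^{2u_{\mathrm{even}}}$, and showing they coincide is a separate argument you have not supplied. The usual remedy of shifting by a linear term, iterating $(\Delta - K)u_{n+1} = |H|^2 e^{2u_n} - K u_n$, also runs into trouble here because $|H|$ is unbounded, so no constant $K$ makes $t\mapsto |H(z)|^2 e^{2t}-Kt$ decreasing uniformly in $z$.

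The paper avoids all of this by using Schauder's fixed point theorem instead of iteration: it observes that $T$ maps the closed convex set $\{v\in L^1(\mathbb{D}): v\le P[h]\}$ into itself, and that $T$ factors through $W^{1,1}_0(\mathbb{D})$ (by Lemma~\ref{compactness-lemma}), which embeds compactly in $L^1(\mathbb{D})$. Schauder then produces a fixed point directly, with no need for monotonicity.
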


Uniqueness and monotonicity follow from Kato's inequality
  \cite[Proposition 6.9]{ponce-book} which states that {\em if $u \in L^1_{\loc}$ and $\Delta u \ge f$ in the sense of distributions with $f \in L^1_{\loc}$, then
$\Delta u^+ \ge f \cdot \chi_{u > 0}$}.
 As usual, $u^+ = \max(u,0)$ denotes the positive part of $u$.

\begin{proof}[Proof of Theorem \ref{kraus-PDE}: uniqueness and monotonicity]
By Kato's inequality,
$$
\Delta (u_1-u_2)^+ \ge  |H|^2 ( e^{2 u_1} - e^{2 u_2}) \cdot \chi_{\{u_1 > u_2\}} \ge 0
$$
is a subharmonic function. The inequality $h_1 \le h_2$ implies that $(u_1-u_2)^+$ has zero boundary values. The maximal principle shows that $(u_1-u_2)^+ \le 0$ or $u_1
 \le u_2$. The same argument also proves uniqueness.
\end{proof}

Existence is a standard application of Schauder's fixed point theorem, although for the convenience of the reader, we spell out the details.
As usual, $G(z, \zeta) =  \log  \bigl | \frac{1-z\overline{\zeta}}{z-\zeta} \bigr |$ denotes the Green's function of the unit disk. 
 We say that a measure $\mu$ on the unit disk is a {\em Blaschke measure}   if $(1-|z|^2) d\mu(z)$ is a finite measure. The following lemma is well-known:

\begin{lemma}
\label{compactness-lemma}
If $\mu$ is a Blaschke measure on the unit disk, then
$$
G_\mu(z) = \frac{1}{2\pi}  \int_{\mathbb{D}} G(z, \zeta) d\mu(\zeta)
$$
lies in the Sobolev space $W^{1,1}_0(\mathbb{D})$ and solves the linear Dirichlet problem
\begin{equation}
\label{eq:LDP}
   \left\{\begin{array}{lr}
        \Delta u = - \mu, &  \text{in } \mathbb{D}, \\
       u = 0, &  \text{on } \partial \mathbb{D},
        \end{array}\right.
\end{equation}
where the boundary condition is understood in terms of weak limits.
\end{lemma}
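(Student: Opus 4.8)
The plan is to verify the three assertions—that $G_\mu$ is finite and integrable, that it solves $\Delta u = -\mu$, and that it lies in $W^{1,1}_0(\mathbb{D})$ with vanishing boundary values—by reducing everything to two elementary facts about the Green's function: the distributional identity $\Delta_z G(z,\zeta) = -2\pi\delta_\zeta$, and the explicit formula $G(z,\zeta) = \frac{1}{2}\log\bigl(1 + \frac{(1-|z|^2)(1-|\zeta|^2)}{|z-\zeta|^2}\bigr)$, which exhibits $G$ as nonnegative and lets me read off pointwise bounds. Throughout, the Blaschke hypothesis enters only through the finiteness of $\int_{\mathbb{D}}(1-|\zeta|^2)\,d\mu(\zeta)$.

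First, integrability and the equation. Since $\frac{1}{2\pi}\int_{\mathbb{D}}G(z,\zeta)\,|dz|^2$ solves $\Delta w = -1$ with zero boundary values, it equals $\frac{1}{4}(1-|\zeta|^2)$; as the integrand is nonnegative, Tonelli gives $\|G_\mu\|_{L^1(\mathbb{D})} = \frac{1}{4}\int_{\mathbb{D}}(1-|\zeta|^2)\,d\mu(\zeta) < \infty$, so $G_\mu$ is finite a.e.\ and the order of integration may be freely exchanged below. For $\phi\in C_c^\infty(\mathbb{D})$, Fubini (legitimate because $G\ge 0$ and $\Delta\phi$ is bounded with compact support, against which $\int_{\mathbb{D}}G(z,\zeta)\,|dz|^2$ is $\mu$-integrable) yields $\int_{\mathbb{D}} G_\mu\,\Delta\phi\,|dz|^2 = \frac{1}{2\pi}\int_{\mathbb{D}}\bigl(\int_{\mathbb{D}} G(z,\zeta)\Delta\phi(z)\,|dz|^2\bigr)\,d\mu(\zeta)$. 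The inner integral is the defining property of the Green's function, $\int_{\mathbb{D}} G(z,\zeta)\Delta\phi(z)\,|dz|^2 = -2\pi\phi(\zeta)$, so the expression collapses to $-\int_{\mathbb{D}}\phi\,d\mu$, which is exactly $\Delta G_\mu = -\mu$.

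Next, the boundary condition. Averaging the formula for $G$ over the circle $|z|=r$ and using that $z\mapsto\log|1-\bar\zeta z|$ is harmonic gives $\frac{1}{2\pi}\int_0^{2\pi}G(re^{i\theta},\zeta)\,d\theta = \log\frac{1}{\max(r,|\zeta|)}$. Integrating in $\mu$ and applying dominated convergence (the dominant $\log\frac{1}{|\zeta|}$ is comparable to $1-|\zeta|$, hence $\mu$-integrable by Blaschke) shows $\frac{1}{2\pi}\int_0^{2\pi}G_\mu(re^{i\theta})\,d\theta\to 0$ as $r\to 1$. Since $G_\mu\ge 0$, this forces $G_\mu(re^{i\theta})\,d\theta\to 0$ in the weak-$*$ topology, which is the zero boundary condition in the required sense.

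Finally, membership in $W^{1,1}_0$. Differentiating the formula produces the clean kernel $|\nabla_z G(z,\zeta)| = \frac{1-|\zeta|^2}{|1-\bar\zeta z|\,|z-\zeta|}$. The plan is to truncate $\mu_R = \mu|_{\{|\zeta|\le R\}}$: each $\mu_R$ is a compactly supported finite measure, so the classical gradient estimate for Newtonian potentials places $G_{\mu_R}\in W^{1,p}_0(\mathbb{D})$ for every $p<2$, in particular in $W^{1,1}_0$ and vanishing at $\partial\mathbb{D}$, after which one passes to the limit $R\to 1$. I expect the main obstacle to be the uniform control of $\|\nabla G_\mu\|_{L^1}$ up to the boundary, because integrating the kernel in $z$ gives $\int_{\mathbb{D}}|\nabla_z G(z,\zeta)|\,|dz|^2 \asymp (1-|\zeta|^2)\log\frac{1}{1-|\zeta|^2}$, so the naive termwise (triangle-inequality) bound loses a logarithm and is \emph{not} dominated by the Blaschke sum. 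The real work is therefore to estimate $\nabla G_\mu = \frac{1}{2\pi}\int\nabla_z G(\cdot,\zeta)\,d\mu$ without discarding cancellation: one splits the disk into the diagonal region $\{|z-\zeta|\lesssim 1-|\zeta|\}$, where $|\nabla_z G|\approx|z-\zeta|^{-1}$ integrates to $O(1-|\zeta|)$ and is safely within Blaschke, and the far region, where the genuine $(1-|\zeta|^2)$ decay of the kernel must be weighed against the Carleson-box geometry rather than bounding each point of $\mu$ in isolation.
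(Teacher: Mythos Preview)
Your handling of the $L^1$ bound, the distributional identity $\Delta G_\mu=-\mu$, and the weak-$*$ boundary condition is correct and in fact more detailed than the paper's sketch. The gap is in the $W^{1,1}_0$ assertion, which you yourself flag as unfinished. Your kernel formula $|\nabla_z G(z,\zeta)|=\dfrac{1-|\zeta|^2}{|1-\bar\zeta z|\,|z-\zeta|}$ is correct, and so is the observation that
\[
\int_{\mathbb D}|\nabla_z G(z,\zeta)|\,|dz|^2 \ \asymp\ (1-|\zeta|^2)\log\frac{1}{1-|\zeta|^2},
\]
so Fubini plus the triangle inequality only yields $\|\nabla G_\mu\|_{L^1}\lesssim\int_{\mathbb D}(1-|\zeta|)\log\tfrac{1}{1-|\zeta|}\,d\mu(\zeta)$, which can diverge for a Blaschke measure. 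Your proposed rescue --- a diagonal/far-field splitting with ``Carleson-box geometry'' and unspecified cancellation --- is left entirely at the level of a plan; in particular, for measures sitting along a single radius the individual gradients $\nabla_z G(\cdot,\zeta)$ are essentially aligned near the relevant boundary point, so it is not evident what cancellation one is appealing to. As written, the proof that $G_\mu\in W^{1,1}_0(\mathbb D)$ is missing.

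For comparison, the paper does not give a self-contained argument here either: it records two kernel estimates and explicitly defers the $W^{1,1}$ regularity to Stampacchia's truncation method in Ponce's book. Note that the paper's second displayed estimate integrates $|\partial_z G(z,\zeta)|$ in $\zeta$, not in $z$, so even granting it one cannot read off $\partial_z G_\mu\in L^1$ by Fubini alone; the paper is genuinely relying on the external reference. Your write-up would be complete if you either carried out a Stampacchia-type truncation argument adapted to the Blaschke setting, or actually proved the cancellation estimate you sketch --- neither is present.
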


The $W^{1,1}$ regularity can be obtained in a number of ways. For a proof using Stampacchia's truncation method, see  \cite[Chapter 5]{ponce-book}. One can also
give a direct proof using the estimates
$$
\int_{\mathbb{D}} G(z, \zeta) |d\zeta|^2 \le C(1 - |z|)^2
$$
and
$$
\int_{\mathbb{D}} \bigl |\partial_z G(z, \zeta)\bigr | \, |d\zeta|^2 \le C(1 - |z|).
$$
The first estimate implies that $G_\mu \in L^{1}(\mathbb{D})$ while the second estimate implies that $\partial_z G_\mu \in L^1(\mathbb{D})$. Since $G$ is real-valued, we also have $\partial_{\overline{z}} G_\mu \in L^1(\mathbb{D})$.

\begin{proof}[Proof of Theorem \ref{kraus-PDE}:  existence]
Let $P_h$ denote the harmonic extension of $h$ to the unit disk. Since $h: \partial \mathbb{D} \to \mathbb{R}$ is bounded above by assumption, $P_h$ is bounded above on the unit disk.
Consider the closed convex set
$$\mathscr K_h \, = \, \Bigl \{v \in L^1(\mathbb{D}, |dz|^2), \  v \le P_h \Bigr \} \, \subset \,  L^1(\mathbb{D}, |dz|^2)$$
and the operator
\begin{equation}
\label{eq:schauder-operator}
(Tv)(z) = P_h(z) - \frac{1}{2\pi} \int_{\mathbb{D}} e^{2v(\zeta)} |H(\zeta)|^2 G(z, \zeta) |d\zeta|^2.
\end{equation}
The condition  $H \in A^2_1(\mathbb{D})$ tells us that  $d\mu_v(\zeta) = e^{2v(\zeta)} |H(\zeta)|^2 |d\zeta|^2$ is a Blaschke measure. Lemma \ref{compactness-lemma} tells us that integral in (\ref{eq:schauder-operator}) lies in $W^{1,1}_0(\mathbb{D})$. Since $W^{1,1}_0(\mathbb{D})$ sits compactly inside $L^1(\mathbb{D})$, the operator $T$ is compact and maps $\mathscr K_h$ into itself. By Schauder's fixed point theorem, $T$ has a fixed point. Applying Lemma \ref{compactness-lemma} again, we see that any fixed point of $T$ solves $\GCE_H$. The proof is complete.
\end{proof}

For future reference, we record:

\begin{lemma}[Comparison principle]
Suppose $u$ is a subsolution  and $v$ is a supersolution. If $u \le v$ on the boundary, then $u \le v$ in the interior. Here, one interprets ``\,$u \le v$ on $\partial \mathbb{D}$'' as 
$$
\limsup_{r \to 1} \int_0^{2\pi} \phi (e^{i\theta}) \bigl (u(re^{i\theta}) - v(re^{i\theta}) \bigr ) d\theta \le 0, \qquad \phi \in C(\partial \mathbb{D}), \quad \phi \ge 0.$$
\end{lemma}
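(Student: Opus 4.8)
The plan is to reduce the statement to a maximum principle for a single subharmonic function, exactly as in the proof of uniqueness for Theorem~\ref{kraus-PDE}. Set $w = u - v$. Since $u$ is a subsolution and $v$ a supersolution, subtracting the two distributional inequalities gives
$$
\Delta w = \Delta u - \Delta v \ge |H|^2 e^{2u} - |H|^2 e^{2v} = |H|^2\bigl(e^{2u} - e^{2v}\bigr)
$$
in the sense of distributions, with the right-hand side locally integrable. I would then feed this into Kato's inequality with $f = |H|^2(e^{2u}-e^{2v})$, obtaining
$$
\Delta w^+ \ge |H|^2\bigl(e^{2u} - e^{2v}\bigr)\,\chi_{\{u > v\}}.
$$
The key algebraic observation is that on the set $\{u > v\}$ one has $e^{2u} > e^{2v}$ because $t \mapsto e^{2t}$ is increasing; hence the right-hand side is $\ge 0$ and $w^+ = (u-v)^+$ is subharmonic on $\mathbb{D}$.

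It remains to run the maximum principle. Because $w^+ \ge 0$ is subharmonic, its circular means $M(r) = \frac{1}{2\pi}\int_0^{2\pi} w^+(re^{i\theta})\,d\theta$ are nonnegative and nondecreasing in $r$, so $M(1^-) = \lim_{r\to 1}M(r)$ exists in $[0,\infty]$. If one can show $M(1^-) \le 0$, then $M \equiv 0$, and since $w^+ \ge 0$ this forces $w^+ \equiv 0$, i.e.\ $u \le v$ throughout $\mathbb{D}$. Thus everything comes down to the single inequality $\limsup_{r\to 1}\int_0^{2\pi} w^+(re^{i\theta})\,d\theta \le 0$.

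The hard part will be deducing this boundary estimate for $w^+$ from the hypothesis, which only controls $w = u-v$: the assumption that $u \le v$ on $\partial\mathbb{D}$ gives $\limsup_{r\to 1}\int_0^{2\pi}\phi\,(u-v)(re^{i\theta})\,d\theta \le 0$ for all $\phi \ge 0$, but passing to the positive part does not commute with the weak-$*$ limit. I would resolve this at the level of boundary measures. Since $u$ is a subsolution it is subharmonic ($\Delta u \ge 0$), and using the Green representations of $u$ and $v$ together with the fact that Green potentials of Blaschke measures have vanishing radial limits almost everywhere (Lemma~\ref{compactness-lemma}), both $u$ and $v$ acquire almost-everywhere radial boundary values $u^*, v^*$; the boundary hypothesis then says precisely that the weak-$*$ boundary measure $\nu_w$ of $w$ is $\le 0$, so in particular its absolutely continuous density $w^* = u^* - v^* \le 0$ a.e.\ and its singular part is nonpositive. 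One then checks that the boundary measure of the subharmonic function $w^+$ has absolutely continuous density $(w^*)^+ = 0$ and carries no positive singular part, so that its weak-$*$ boundary measure vanishes; this yields $M(1^-) = 0$ and completes the argument. This measure-theoretic bookkeeping at the boundary is the only delicate point, whereas the interior computation via Kato's inequality is immediate.
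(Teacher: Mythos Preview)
The paper does not actually supply a separate proof of this lemma; it is recorded ``for future reference'' immediately after the uniqueness/monotonicity proof of Theorem~\ref{kraus-PDE}, with the tacit understanding that the same Kato-inequality argument applies. Your core computation --- subtract the two inequalities, apply Kato to obtain $\Delta(u-v)^+ \ge |H|^2(e^{2u}-e^{2v})\chi_{\{u>v\}} \ge 0$, then invoke the maximum principle --- is exactly that argument, so at the level of strategy you and the paper coincide.

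Where you add value is in flagging the boundary step: the hypothesis controls the weak-$*$ limit of $w = u-v$, not of $w^+$, and the paper's one-line ``$(u_1-u_2)^+$ has zero boundary values'' does sweep something under the rug. However, your proposed resolution has gaps. You assert that both $u$ and $v$ admit Green representations and a.e.\ radial boundary values, but for a general supersolution one only knows $\Delta v \le |H|^2 e^{2v}$, which gives no sign information on $\Delta v$ and hence no Riesz decomposition without further hypotheses (a harmonic minorant, integrability of $\Delta v$, etc.). More seriously, your claim that ``the boundary measure of the subharmonic function $w^+$ has absolutely continuous density $(w^*)^+$ and carries no positive singular part'' is exactly the statement that needs proof: taking positive parts does not commute with weak-$*$ limits, and a subharmonic function can in principle develop positive singular boundary mass even when the signed function it came from does not. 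So the ``measure-theoretic bookkeeping'' you defer is not routine and, as written, the argument is incomplete at precisely the point you yourself identify as delicate.
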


\section{Canonical solutions}

Given a non-zero function $H \in A_1 ^2(\mathbb{D})$, let $u_{H,n}$ be the solution of the boundary value problem
\begin{equation}
\label{eq:H-GCE}
   \left\{\begin{array}{lr}
        \Delta u = |H|^2 e^{2u}, &  \text{in } \mathbb{D}, \\
       u = n, &  \text{on } \partial \mathbb{D}.
        \end{array}\right.
\end{equation}
By Liouville's theorem, $$u_{H, n} = \log \frac{1}{|H|} \frac{2|I_n'|}{1-|I_n|^2}$$ for some holomorphic self-map $I_n$ of the unit disk. From the monotonicity of solutions, we know that the functions $u_{H,n}$ are increasing in $n$.
Taking $n \to \infty$, we obtain 
 the {\em canonical solution} 
 \begin{equation}
 u_{H, \infty} := \lim_{n \to \infty} u_{H,n} = \log \frac{1}{|H|} \frac{2|I'|}{1-|I|^2}.
 \end{equation}
From the above construction, it is clear that the canonical solution is the minimal solution which dominates all solutions with finite boundary data.

Our main theorem states:

\begin{theorem}
\label{bounded-generator-thm}
The function $I$ is an inner function.
The invariant subspace $[H]$ is generated by $I'_n$ for any $n \in \mathbb{R}$ and contains $I'$.
\end{theorem}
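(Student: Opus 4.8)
The plan is to separate the rigid algebraic structure of the approximants $I_n$ from the harder boundary analysis, dispatching the two ``soft'' assertions first. By Liouville's theorem, $v_n:=u_{H,n}+\log|H|=\log\lambda_{I_n}$ solves $\Delta v_n=e^{2v_n}+2\pi\sum_{c\in\mathcal Z(H)}\delta_c$, so the critical divisor of $I_n$ equals $\mathcal Z(H)$ \emph{exactly} and the quotient $g_n:=I_n'/H$ is holomorphic and zero-free on $\mathbb D$. Comparing $u_{H,n}$ with the constant supersolution $n$ via the comparison principle (note $\Delta n=0\le|H|^2e^{2n}$) gives $u_{H,n}\le n$ throughout $\mathbb D$, whence $|g_n|=\tfrac12(1-|I_n|^2)e^{u_{H,n}}\le\tfrac12 e^{n}$ and thus $g_n\in H^\infty$. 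Since $[H]$ is invariant under multiplication by bounded analytic functions, $I_n'=g_nH\in[H]$, i.e.\ $[I_n']\subseteq[H]$. The inclusion $I'\in[H]$ then follows with no extra work: the norms $\|I_n'\|_{A^2_1}$ are uniformly bounded (by the Littlewood--Paley identity (\ref{eq:lp}) and $\|I_n\|_{H^2}\le\|I_n\|_\infty\le1$), and the $I_n$ form a normal family converging locally uniformly to $I$, so $I_n'\rightharpoonup I'$ weakly in $A^2_1$; as each $I_n'\in[H]$ and $[H]$ is weakly closed, $I'\in[H]$.

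The reverse inclusion $[H]\subseteq[I_n']=[g_nH]$ amounts to dividing out $g_n$, which is legitimate precisely when $g_n$ is \emph{outer}: in that case $1/g_n\in N^+$ is weakly invertible, one may select polynomials $p_k$ with $g_np_k$ uniformly bounded and $g_np_k\to1$ pointwise, and dominated convergence against the integrable weight $|H|^2(1-|z|)$ gives $Hg_np_k\to H$ in $A^2_1$, so that $H\in[g_nH]$. Proving that $g_n$ carries no singular inner factor is the step I expect to be the main obstacle, and it is exactly the rigidity that makes the canonical solution see the whole subspace $[H]$ rather than merely $\mathcal Z(H)$. The natural approach is an argument in the spirit of Kraus: a nontrivial factorization $g_n=S_\sigma O_n$ with $\sigma\neq0$ is invisible to the boundary data of $u_{H,n}$, so replacing $S_\sigma$ by $1$ ought to produce a competing self-map whose pullback metric still solves $\GCE_H$ with boundary value $n$ while strictly dominating $u_{H,n}$, contradicting the uniqueness in Theorem \ref{kraus-PDE}. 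The delicate point is justifying that this substitution preserves the boundary condition.

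Innerness of $I$ would then come from the same boundary analysis. Passing to the limit in $|g|=\tfrac12(1-|I|^2)e^{u_{H,\infty}}$, where $g=I'/H=\lim g_n$ is zero-free, yields $1-|I|^2=2|g|e^{-u_{H,\infty}}$. I would show that $u_{H,n}\nearrow u_{H,\infty}$ together with $u_{H,n}\to n$ nontangentially (boundary regularity for constant data) forces the radial limit $u_{H,\infty}^{\ast}=+\infty$ almost everywhere; combined with finiteness of the radial limits of $g$ almost everywhere—which is governed by the outer structure secured in the previous step—this gives $|I^{\ast}|=1$ a.e., i.e.\ $I$ is inner. The analytic heart is the upgrade from the weak-$\ast$ interpretation of the boundary condition to almost-everywhere radial statements, and the control of the radial behaviour of $g$. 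In this sense innerness and the reverse inclusion are two faces of a single rigidity phenomenon, where essentially all of the difficulty is concentrated.
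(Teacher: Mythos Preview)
Your treatment of the two ``soft'' inclusions $I'_n\in[H]$ and $I'\in[H]$ is essentially the paper's argument (Lemmas~\ref{lem32}--\ref{lem33}): the bound $u_{H,n}\le n$ gives $g_n=I'_n/H\in H^\infty$, and weak closure of $[H]$ passes the membership to the limit.

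For the two hard assertions you take a different route from the paper, and the difficulties you flag are real gaps.

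\emph{Innerness of $I$.} The paper does not attempt to upgrade the weak-$*$ boundary data to a.e.\ radial statements. Instead it bootstraps through the already-established inclusion $I'\in[H]$: choose polynomials $p_k$ with $Hp_k\to I'$ in $A^2_1$, observe that $\log\frac{1}{|Hp_k|}\cdot\frac{2|I'|}{1-|I|^2}$ is a supersolution of $\GCE_{Hp_k}$, and compare to get $u_{Hp_k,n}\le\log\frac{1}{|Hp_k|}\cdot\frac{2|I'|}{1-|I|^2}$. Letting $k\to\infty$ gives $u_{I',n}\le\log\frac{2}{1-|I|^2}$ for every $n$, hence $u_{I',\infty}\le\log\frac{2}{1-|I|^2}$; combined with the explicit formula $u_{J',\infty}=\log\frac{2}{1-|J|^2}$ for inner $J$, this forces $I$ to be inner. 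Your radial-limit plan, by contrast, needs $u_{H,n}\to n$ nontangentially and then an interchange of the monotone limit in $n$ with the radial limit---neither of which follows from the weak-$*$ boundary condition alone.

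\emph{Reverse inclusion $[H]\subseteq[I'_0]$.} The paper does not argue by ruling out a singular factor of $g_0$. It constructs explicit approximants: with $\phi_r:=\out_{1-|rI_0|^2}$ (outer, and $|\phi_r|\ge 1-r^2$ on $\partial\mathbb D$, so $1/\phi_r\in H^\infty$), set $H_r:=\tfrac{2r}{\phi_r}\,I'_0\in[I'_0]$. By construction $u_r:=\log\frac{1}{|H_r|}\cdot\frac{2r|I'_0|}{1-|rI_0|^2}$ has minimal harmonic majorant $0$, as does $u_{H,0}$; comparing the two harmonic functions $\log|H|$ and $\log|H_r|$ then yields $|H_r|\nearrow|H|$, hence $H_r\to e^{i\theta}H$ in $A^2_1$ along a subsequence, so $H\in[I'_0]$. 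Your Kraus-style contradiction has a concrete obstacle: writing $g_n=S_\sigma O_n$ and ``replacing $S_\sigma$ by $1$'' produces $O_nH$, but there is no reason this is the derivative of a holomorphic self-map of $\mathbb D$, so you do not obtain a competing solution of $\GCE_H$ with boundary value $n$. Kraus's original argument works against the \emph{maximal} solution, where any solution may be used as a competitor; the uniqueness for fixed boundary data is a weaker lever.
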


\begin{remark}
It is well-known that $\GCE_H$ has a maximal solution, which dominates any other solution pointwise. It may come as a surprise to the reader that the canonical and maximal solutions may be different.
For instance, if $H$ has no zeros in the disk but generates a non-trivial invariant subspace in $A^2_1(\mathbb{D})$, then the maximal solution $u_{H, \max} = \log \frac{1}{|H|} \frac{2}{1-|z|^2}$ has the Liouville function $I_{H, \max}(z) = z$ but $I'_{H, \max} = 1 \notin [H]$. In Section \ref{sec:can-max}, we will show that the canonical and maximal solutions coincide if and only if $I$ is a maximal Blaschke product.
\end{remark}

For the convenience of the reader, we have split up the proof of Theorem \ref{bounded-generator-thm} into a long sequence of lemmas. In order to verify  Conjecture \ref{main-conjecture}, one would also need to show that $I'$ generates $[H]$. In Section \ref{sec:conj-proof}, we will reduce
 Conjecture \ref{main-conjecture} to a statement in PDE, which we hope is more tractable.

\subsection{\texorpdfstring{Why is $I' \in [H]$?}{Why is I' in H?}}

\begin{lemma}
\label{lem32}
 If a solution $u = \log \frac{1}{|H|} \frac{2|F'|}{1-|F|^2}$ is bounded above, then $F' \in [H]$.
\end{lemma}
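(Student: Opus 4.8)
The plan is to extract from the hypothesis a pointwise comparison between $F'$ and $H$, and then to realize $F'$ as an $A^2_1$-limit of elements of $[H]$. Suppose $u \le M$ on $\mathbb{D}$. Unwinding the definition of $u$ gives $\frac{2|F'(z)|}{1-|F(z)|^2} \le e^M |H(z)|$, and since $1-|F|^2 \le 1$ we obtain the uniform bound $|F'(z)| \le \tfrac{1}{2} e^M (1-|F(z)|^2)|H(z)| \le \tfrac{1}{2} e^M |H(z)|$ for all $z \in \mathbb{D}$.

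Next I would check that $g := F'/H$ is a bounded holomorphic function. Because $u$ is a genuine solution of $\GCE_H$ in the weak sense tested against $C_c^\infty(\mathbb{D})$, the distributional identity $\Delta u = |H|^2 e^{2u}$ carries no point masses; comparing with the Liouville identity $\Delta \log \lambda_F = \lambda_F^2 + 2\pi \sum_{c \in \crit(F)} \delta_c$ and with $\Delta \log|H| = 2\pi \sum_{z \in \mathcal{Z}(H)} \delta_z$ forces $\crit(F) = \mathcal{Z}(H)$ with multiplicities. Hence $H$ divides $F'$ in the ring of holomorphic functions, so $g$ is holomorphic, and the previous paragraph shows $\|g\|_\infty \le \tfrac{1}{2} e^M$. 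In particular $F' = gH$ with $g \in H^\infty$.

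It remains to prove that $gH \in [H]$ whenever $g \in H^\infty$. The obstruction is that a bounded holomorphic function need not be a limit of polynomials, so I would introduce the dilations $g_r(z) = g(rz)$ for $0 < r < 1$. Each $g_r$ is holomorphic on a neighborhood of $\overline{\mathbb{D}}$, hence is a uniform limit on $\overline{\mathbb{D}}$ of its Taylor polynomials $p_j$; since $\|p_j H - g_r H\|_{A^2_1} \le \|p_j - g_r\|_{L^\infty(\overline{\mathbb{D}})} \|H\|_{A^2_1} \to 0$ and each $p_j H \in [H]$, the closedness of $[H]$ gives $g_r H \in [H]$. Finally, $\|g_r\|_\infty \le \|g\|_\infty$ by the maximum principle, so $|g_r - g|^2 |H|^2 (1-|z|) \le 4\|g\|_\infty^2 |H|^2 (1-|z|) \in L^1(\mathbb{D})$; since $g_r \to g$ pointwise, dominated convergence yields $g_r H \to gH$ in $A^2_1$. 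As $[H]$ is closed and each $g_r H \in [H]$, we conclude $F' = gH \in [H]$.

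I expect the main obstacle to be the last step: the passage from ``$Hp \in [H]$ for polynomials $p$'' to ``$Hg \in [H]$ for $g \in H^\infty$''. This cannot be achieved by a single approximation, and the two-parameter device — first Taylor-approximating the dilate $g_r$, then letting $r \to 1$ — together with the dominated convergence estimate powered by the membership $H \in A^2_1$, is the technical heart of the argument.
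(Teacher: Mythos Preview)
Your proof is correct and follows the same line as the paper's: from $u\le M$ deduce $|F'/H|\le e^{M}/2$, observe that $g=F'/H$ is a bounded holomorphic function, and conclude $F'=gH\in[H]$ via the $H^\infty$-invariance of $[H]$. The paper's proof is much terser: it simply quotes as known the fact that $z$-invariant closed subspaces of $A^2_1$ are closed under multiplication by bounded holomorphic functions, whereas you supply both a careful justification that $g$ is holomorphic (matching $\crit(F)$ with $\mathcal Z(H)$ via the absence of point masses in $\Delta u$) and a full proof of the $H^\infty$-invariance by the standard dilation-plus-dominated-convergence argument.
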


\begin{proof}
Post-composing with a M\"obius tranformation in $\aut(\mathbb{D})$ if necessary, we may assume that $F(0) = 0$.
Since $u$ is subharmonic,
$$
u_{H,n} \le \sup u \text{ on } \mathbb{D} \ \implies \  |F'/H| \le e^{\sup u}/2 \ \implies \ F' \in [H],
$$
where we used that invariant subspaces of $A^2_1(\mathbb{D})$ are closed under multiplication by bounded holomorphic functions. 
\end{proof}

\begin{lemma}
\label{lem33}
Suppose $\{F_n\} \subset \hol(\mathbb{D}, \mathbb{D})$  converges uniformly on compact subsets to $F \in \hol(\mathbb{D}, \mathbb{D})$. If each $F'_n \in [H]$, then $F' \in [H]$.
\end{lemma}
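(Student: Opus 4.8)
The plan is to work in the Hilbert space $A^2_1$ and exploit two facts: that the hypothesis $F_n \in \hol(\mathbb{D},\mathbb{D})$ forces a uniform bound on $\|F_n'\|_{A^2_1}$, and that the norm-closed subspace $[H]$ is automatically weakly closed. Weak compactness then lets us pass to the limit. The key point is that one should not expect norm convergence $F_n' \to F'$; weak convergence is both available and sufficient.

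First I would record the uniform bound. Since each $F_n$ is a self-map of the disk, $\|F_n\|_{H^2} \le \|F_n\|_\infty \le 1$, so the Littlewood--Paley identity (\ref{eq:lp}) gives
$$
\frac{1}{2\pi}\int_{\mathbb{D}} |F_n'|^2 \log\frac{1}{|z|}\,|dz|^2 = \|F_n\|_{H^2}^2 - |F_n(0)|^2 \le 1.
$$
Using the elementary inequality $1-|z| \le \log\frac{1}{|z|}$ on $\mathbb{D}$, this yields
$$
\|F_n'\|_{A^2_1}^2 = \int_{\mathbb{D}} |F_n'|^2 (1-|z|)\,|dz|^2 \le 2\pi,
$$
so $\{F_n'\}$ is bounded in $A^2_1$; the same estimate applied to $F$ shows $F' \in A^2_1$.

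Next, $F_n \to F$ locally uniformly implies $F_n' \to F'$ locally uniformly by Cauchy's integral formula. As $\{F_n'\}$ is bounded in the Hilbert space $A^2_1$, after passing to a subsequence we have $F_n' \rightharpoonup g$ weakly for some $g \in A^2_1$. Since $A^2_1$ is a reproducing kernel Hilbert space, point evaluations are weakly continuous, so for each $z \in \mathbb{D}$ we obtain $g(z) = \lim_n F_n'(z) = F'(z)$; hence $g = F'$. Finally, the linear subspace $[H]$ is norm-closed, hence weakly closed, and $F_n' \in [H]$ for all $n$, so the weak limit $F' = g$ lies in $[H]$.

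The only point requiring care is the first step, which converts the given local uniform convergence into weak convergence in $A^2_1$: it is the uniform $A^2_1$-bound, coming directly from the self-map property via Littlewood--Paley, that makes the sequence $\{F_n'\}$ weakly precompact. Everything afterward is soft --- weak subsequential limits are identified pointwise because $A^2_1$ has bounded point evaluations, and membership in $[H]$ survives weak limits because closed subspaces are weakly closed.
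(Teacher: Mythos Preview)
Your proof is correct and follows essentially the same approach as the paper: use the Littlewood--Paley identity to get a uniform $A^2_1$-bound on $\{F_n'\}$, extract a weakly convergent subsequence, and invoke that closed subspaces are weakly closed. Your version is somewhat more explicit than the paper's in identifying the weak limit with $F'$ via the reproducing kernel property, which is a helpful clarification.
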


\begin{proof}
While $F_n$ may not converge to $F$ in $A^2_1$, the Littlewood-Paley identity (\ref{eq:lp}) tells us that
$$\| F'_n \|_{A^2_1} \asymp \| F_n \|_{H^2} \le 1$$ are uniformly bounded. This allows us to pass to a subsequence that converges weakly. It remains to use the following fact from functional analysis: {\em in a Banach space, a subspace is closed if and only if it is weakly closed.}
\end{proof}

\begin{corollary}
The Liouville map $I_H$ associated to the canonical solution $u_{H, \infty}$ satisfies $I_H' \in [H]$.
\end{corollary}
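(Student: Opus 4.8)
The plan is to combine the two preceding lemmas: Lemma~\ref{lem32} produces $I_n' \in [H]$ for each finite $n$, and Lemma~\ref{lem33} transports this membership to the limit, once we know that the Liouville maps $I_n$ converge to $I_H$ locally uniformly. So the argument splits into checking the hypothesis of Lemma~\ref{lem32} at finite level and then establishing convergence of the maps; all the real content is in the second step.

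First I would observe that each $u_{H,n}$ is bounded above. Indeed, a fixed point of the Schauder operator (\ref{eq:schauder-operator}) with boundary data $h \equiv n$ satisfies $u_{H,n} = P_n - G_\mu$, where $P_n \equiv n$ is the constant harmonic extension and $G_\mu \ge 0$ is the Green potential of the positive measure $d\mu = e^{2u_{H,n}}|H|^2 \, |dz|^2$; hence $u_{H,n} \le n$. Lemma~\ref{lem32} then yields $I_n' \in [H]$ for every finite $n$.

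Second, I would read the monotone convergence $u_{H,n} \to u_{H,\infty}$ through the pullback metrics. Writing $\lambda_{I_n} = \frac{2|I_n'|}{1-|I_n|^2} = |H| e^{u_{H,n}}$ and $\lambda_{I_H} = |H| e^{u_{H,\infty}}$, monotonicity gives $\lambda_{I_n} \to \lambda_{I_H}$ pointwise on $\{H \ne 0\}$. To upgrade this to convergence of the maps, fix a base point $z_0 \in \mathbb{D}$ with $H(z_0) \ne 0$ and $z_0 \notin \crit(I_H)$, and normalize the representatives by post-composing with automorphisms so that $I_n(z_0) = 0$, $I_n'(z_0) > 0$, and likewise for $I_H$. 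The family $\{I_n\} \subset \hol(\mathbb{D}, \mathbb{D})$ is normal, so any subsequence has a locally uniform limit $\tilde I$; since $\tilde I(z_0) = 0$, the limit is not a unimodular constant, whence $\tilde I \in \hol(\mathbb{D}, \mathbb{D})$ and $\lambda_{I_n} \to \lambda_{\tilde I}$ locally uniformly. Comparing the two limits forces $\lambda_{\tilde I} = \lambda_{I_H}$, and the uniqueness half of Liouville's theorem gives $\tilde I = m \circ I_H$ for some $m \in \aut(\mathbb{D})$.

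Finally, the normalization pins down $m$: from $\tilde I(z_0) = 0 = I_H(z_0)$ we get $m(0) = 0$, so $m$ is a rotation, while evaluating the densities at $z_0$ gives $\tilde I'(z_0) = \lim_n I_n'(z_0) = \tfrac12 |H(z_0)| e^{u_{H,\infty}(z_0)} = I_H'(z_0) > 0$, forcing the rotation angle to vanish. Thus $\tilde I = I_H$; as every subsequential limit equals $I_H$, the whole sequence converges, $I_n \to I_H$ locally uniformly, and Lemma~\ref{lem33} applied with $F_n = I_n$ and $F = I_H$ delivers $I_H' \in [H]$. The main obstacle is precisely the passage from convergence of conformal densities to convergence of the maps: this requires both a careful normalization (so that no mass escapes to the boundary and subsequential limits remain genuine self-maps) and the uniqueness in the Liouville correspondence to identify the limit. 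Once these are in place the remainder is a routine normal-families argument.
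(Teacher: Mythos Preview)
Your proposal is correct and follows exactly the route the paper intends: the corollary is stated immediately after Lemmas~\ref{lem32} and \ref{lem33} without proof, and your argument is precisely the expected combination of the two, with the locally uniform convergence $I_n \to I_H$ supplied via normal families and the uniqueness in the Liouville correspondence. The only detail the paper leaves entirely implicit---that a normalized subsequential limit of the $I_n$ must coincide with $I_H$---is the one you carefully justify, so your write-up is in fact more complete than the paper's.
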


\subsection{Why is $I$ an inner function?}

\begin{lemma}
When $H = I'$ is the derivative of an inner function, we can write down the canonical solution explicitly: 
$$
u_{I', \infty} = \log \frac{1}{|I'|} \frac{2|I'|}{1-|I|^2} = \log \frac{2}{1-|I|^2}.
$$
\end{lemma}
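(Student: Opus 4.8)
The plan is to realize the candidate $u:=\log\frac{2}{1-|I|^2}$ as the increasing limit, as the boundary data tends to $+\infty$, of an explicit family of \emph{bounded} solutions of $\GCE_{I'}$, thereby identifying it with the canonical solution $u_{I',\infty}=\lim_{n\to\infty}u_{I',n}$ straight from its definition as a monotone limit. The device that produces such a family is to shrink the target: for $t\in(0,1)$ set $I_t:=tI$, a holomorphic self-map of $\mathbb{D}$ whose derivative $I_t'=tI'$ has the same zero set as $I'$, so that $\crit(I_t)=\crit(I)=\mathcal Z(I')$.

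First I would apply Liouville's theorem to $I_t$ to write down the associated solution
$$
u_t:=\log\frac{1}{|I'|}\,\frac{2|I_t'|}{1-|I_t|^2}=\log\frac{2t}{1-t^2|I|^2}.
$$
Because $\mathcal Z(I_t')=\mathcal Z(I')$, the point masses carried by the pulled-back Poincar\'e metric are cancelled exactly by those of $\log|I'|$, so $u_t$ is a genuine weak solution of $\GCE_{I'}$; this cancellation is precisely the advantage of the weighted equation noted in the introduction. Since $|I_t|\le t<1$, the function $u_t$ is bounded, trapped between $\log(2t)$ and $c_t:=\log\frac{2t}{1-t^2}$. As $I$ is inner, $|I(re^{i\theta})|\to 1$ for a.e.\ $\theta$, so $u_t(re^{i\theta})\to c_t$ a.e.; bounded convergence then shows that $u_t$ has constant boundary data $c_t$, and the uniqueness statement of Theorem~\ref{kraus-PDE} identifies $u_t=u_{I',\,c_t}$.

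Finally I would let $t\uparrow 1$. A one-line computation shows that $u_t$ is increasing in $t$ for each fixed $z$, that $c_t\uparrow+\infty$, and that $u_t\uparrow\log\frac{2}{1-|I|^2}=u$ pointwise. Since the solutions $u_{I',c}$ are monotone in $c$, their supremum over the cofinal family $\{c_t\}$ agrees with $u_{I',\infty}$, whence
$$
u_{I',\infty}=\lim_{t\uparrow 1}u_{I',\,c_t}=\lim_{t\uparrow 1}u_t=\log\frac{2}{1-|I|^2},
$$
which is the assertion.

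The two points requiring care are the verification that $u_t$ solves $\GCE_{I'}$ weakly across $\crit(I)$ (the $\delta$-mass cancellation) and the identification of the boundary data of $u_t$ with the finite constant $c_t$, which rests on the a.e.\ unimodularity of the radial limits of the inner function $I$. If one prefers to decouple the two inequalities rather than argue in one stroke, the reverse bound $u_{I',\infty}\le u$ can instead be obtained from the comparison principle: each $u_{I',n}$ is a subsolution whose finite boundary data lies below the boundary values $+\infty$ of the supersolution $u$, so $u$ dominates every solution with finite boundary data, and minimality of the canonical solution among such dominating solutions then gives $u_{I',\infty}\le u$.
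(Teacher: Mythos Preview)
Your proof is correct and rests on the same approximating family $u_t=\log\frac{2t}{1-t^2|I|^2}$ that the paper uses. The paper, however, only extracts the inequality $u\le u_{I',\infty}$ from this family (each $u_t$ is bounded above, hence dominated by the canonical solution) and treats $u_{I',\infty}\le u$ separately via Kato's inequality applied to $[u_{I',n}-u]^+$. Your additional observation that the boundary data of $u_t$ is \emph{exactly} the constant $c_t$ (by bounded convergence together with the a.e.\ unimodularity of the radial limits of the inner function $I$) lets you identify $u_t=u_{I',c_t}$ on the nose via the uniqueness in Theorem~\ref{kraus-PDE}; both inequalities then drop out of monotonicity in the boundary data and the cofinality of $\{c_t\}$ in $\mathbb{R}$. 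This is a genuine streamlining: it sidesteps Kato's inequality and the slightly delicate business of comparing against a solution whose boundary values are $+\infty$. Your closing alternative via the comparison principle is essentially the paper's Kato argument in disguise; making the phrase ``boundary values $+\infty$'' rigorous there is precisely what Kato's inequality (applied to the bounded nonnegative subharmonic function $[u_{I',n}-u]^+$, which does have zero boundary values in the weak-$*$ sense) is doing.
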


\begin{proof}
($\ge$) To show that $\log \frac{2}{1-|I|^2} \ge u_{I', \infty}$, we check that $\log \frac{2}{1-|I|^2} \ge u_{I', n}$ for any $n \in \mathbb{R}$. To that end, we verify that $v_n = \bigl [u_{I',n} - \log \frac{2}{1-|I|^2} \bigr ]^+ = 0$. 
By Kato's inequality, $\Delta v_n \ge 0$ is subharmonic. Since $I$ is inner, $v_n$ has zero boundary values (in terms of weak-$*$ limits of measures). Since $v_n$ is non-negative, it is identically 0.

($\le$) Conversely, one can approximate $ \log \frac{2}{1-|I|^2}$ by the solutions
$
 \log \frac{1}{|I'|} \frac{2|rI'|}{1-|rI|^2} = \log \frac{2r}{1-|rI|^2}
$
that are bounded above. Since $\log \frac{2r}{1-|rI|^2} \le u_{I', \infty}$ lies below the canonical solution for any $0 < r < 1$, so must $\log \frac{2}{1-|I|^2} \le u_{I', \infty}$.
\end{proof}

\begin{lemma}
\label{lem36}
The Liouville map $I_H$ associated to a canonical solution $u_{H, \infty}$ is an inner function.
\end{lemma}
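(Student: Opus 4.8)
The plan is to prove directly that $I$ has radial boundary values of modulus one almost everywhere. Throughout set $g := I'/H$. Since Liouville's theorem identifies $\crit(I)$ with the zero set $\mathcal Z(H)$ with multiplicity, the quotient $g$ is a zero-free holomorphic function on $\mathbb{D}$, so $\log|g| = \Re \log g$ is harmonic; moreover a direct Laplacian computation (the $\delta$-masses coming from $\log|H|$ and from $\log|I'|$ cancel exactly) shows that
\[ w := \log\frac{2}{1-|I|^2} = u_{H,\infty} - \log|g| \]
is itself a solution of $\GCE_{I'}$, with the same Liouville map $I$. The point of this reduction is that innerness of $I$ is precisely the statement that $w$ has radial limit $+\infty$ almost everywhere, since $w$ has a finite radial limit exactly on the set $E = \{|I^*| < 1\}$.

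First I would establish that the canonical solution attains the boundary value $+\infty$ radially almost everywhere. The solution of (\ref{eq:H-GCE}) with boundary data $n$ satisfies $u_{H,n}\le n$, because it is a fixed point $P_n - G_{\mu_n}$ of the operator $T$ with $P_n\equiv n$ and $G_{\mu_n}\ge 0$. Consequently $d\mu_n = |H|^2 e^{2u_{H,n}}\,|dz|^2 \le e^{2n}|H|^2\,|dz|^2$ is a Blaschke measure, and writing $u_{H,n} = n - G_{\mu_n}$ and invoking the fact that the Green potential of a Blaschke measure has vanishing radial limits almost everywhere, we obtain $u_{H,n}\to n$ radially a.e. Since $u_{H,\infty}\ge u_{H,n}$ for every $n$, it follows that $\liminf_{r\to 1} u_{H,\infty}(re^{i\theta})\ge n$ for a.e.\ $\theta$ and every $n$, i.e.\ $u_{H,\infty}$ has radial limit $+\infty$ almost everywhere.

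Combining these, decompose $u_{H,\infty} = w + \log|g|$ and pass to radial limits: on the putative non-inner set $E$ the limit $w^*$ is finite while $u_{H,\infty}$ tends to $+\infty$, so $\log|g|$ is forced to tend to $+\infty$ along a.e.\ radius in $E$. The final step is to rule this out by showing that the harmonic function $\log|g|$ cannot blow up to $+\infty$ on a set of positive measure; it suffices to show $\log|g|$ is bounded above, since then $C-\log|g|$ is a positive harmonic function and Fatou's theorem supplies finite radial limits a.e., giving $|E|=0$ and hence $|I^*|=1$ a.e.

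I expect this last step to be the main obstacle. The membership $I'\in[H]$ supplied by the Corollary is only an $A^2_1$ (mean-square) statement and does not by itself bound the pointwise quotient $g=I'/H$, so the required one-sided control on $\log|g|$ must be extracted from the minimality characterization of $u_{H,\infty}$ rather than from the closure properties of the subspace. Two routes seem plausible. One is to adapt Kraus' argument: a blow-up of $\log|g|$ on $E$ reflects a nontrivial outer factor of $I$, and one would try to use the minimality of $u_{H,\infty}$ among solutions dominating all finite-boundary-data solutions to produce a strictly smaller such solution in the presence of an outer factor, contradicting canonicity. The other is to transport the problem to the weight $I'$ through the order-preserving bijection $v\mapsto v+\log|g|$ between solutions of $\GCE_{I'}$ and $\GCE_H$, under which $w$ corresponds to $u_{H,\infty}$; showing that $w$ is the canonical solution of $\GCE_{I'}$ would then finish the proof by applying the radial-limit argument of the second paragraph to the weight $I'$ in place of $H$.
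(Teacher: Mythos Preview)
Your setup is fine and your second paragraph (radial limit $+\infty$ for $u_{H,\infty}$) is correct, but the proof is incomplete at exactly the point you flag, and the way past it is the tool you explicitly set aside. You write that the membership $I'\in[H]$ ``does not by itself bound the pointwise quotient $g=I'/H$''; that is true, but it is not what is needed. The paper's argument never tries to bound $g$. Instead it uses $I'\in[H]$ to pick polynomials $p_k$ with $Hp_k\to I'$ in $A^2_1$ (one may take $p_k$ zero-free on $\partial\mathbb{D}$), observes that
\[
w-\log|p_k|\;=\;\log\frac{1}{|Hp_k|}\cdot\frac{2|I'|}{1-|I|^2}
\]
is a \emph{supersolution} of $\GCE_{Hp_k}$ (a solution if $p_k$ has no zeros in $\mathbb{D}$; the extra zeros only help), with boundary values $+\infty$. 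The comparison principle then gives $u_{Hp_k,n}\le w-\log|p_k|$ for every $n$, and letting $k\to\infty$ yields $u_{I',n}\le w$ for all $n$, hence $u_{I',\infty}\le w=\log\frac{2}{1-|I|^2}$. Now your own radial-limit argument, applied to the weight $I'$ (legitimate since $I\in H^\infty$ forces $I'\in A^2_1$), shows $u_{I',\infty}\to+\infty$ radially a.e., and $u_{I',\infty}\le w$ forces $|I^*|=1$ a.e.

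So your Route B is exactly right, and the mechanism that makes it work is the approximation $Hp_k\to I'$ coming from the subspace membership---used not to control $g$ pointwise, but to \emph{change the weight} from $H$ to $I'$ while carrying the supersolution $w$ along. Your Route A (eliminating an outer factor \`a la Kraus via minimality of $u_{H,\infty}$) is a plausible alternative, but it is not what the paper does and would require a separate argument you have not supplied.
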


\begin{proof}
Since $I' \in [H]$, there exists a sequence of polynomials $p_k$ such that $Hp_k \to I'$ in $A^2_1(\mathbb{D})$. Replacing $p_k(z)$ by $p_k(rz)$ if necessary, we may assume that the $p_k$ have no zeros on the unit circle.
By the comparison principle,
\begin{equation}
\label{eq:comp}
u_{H p_k, n}
\le  \log \frac{1}{|Hp_k|} \cdot  \frac{2|I'|}{1-|I|^2},
\end{equation}
 for any $n \in \mathbb{R}$. Note that if $p_k$ has zeros inside the disk, then the RHS will be a supersolution of $\GCE_{Hp_k}$ rather than a solution, which makes it easier for (\ref{eq:comp}) to hold.
Taking $k \to \infty$ in (\ref{eq:comp}), we get 
$$
u_{I', n} \le  \log  \frac{2}{1-|I|^2}.
$$ 
Since this is true for any $n \in \mathbb{R}$, 
$
 u_{I', \infty} \le \log \frac{2}{1-|I|^2},
$
which forces $I$ to be inner.
\end{proof}

\begin{lemma}
Inner functions embed into invariant subspaces: $$\inn / \, \aut(\mathbb{D}) \subseteq\bigl \{ \text{1-generated invariant subspaces of }A^2_1 \bigr \}.$$
\end{lemma}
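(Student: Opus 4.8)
The plan is to realize the claimed inclusion by the map $[I] \mapsto [I']$ and to check its two defining features separately: that it descends to a well-defined map on $\inn/\aut(\mathbb{D})$ landing among the $1$-generated invariant subspaces, and that it is injective.

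First I would dispose of well-definedness. For an inner function $I$ the Littlewood--Paley identity (\ref{eq:lp}) gives $\|I'\|_{A_1^2} \asymp \|I\|_{H^2} = 1$, so $I' \in A_1^2$ and $[I']$ is a $1$-generated invariant subspace. To see that $[I']$ depends only on the class of $I$ in $\inn/\aut(\mathbb{D})$, write a post-composition $m \circ I$ with $m(w) = e^{i\theta}\frac{w-a}{1-\overline{a}w}$; then $(m\circ I)' = (m'\circ I)\, I'$, where both $m'\circ I$ and its reciprocal are bounded holomorphic on $\mathbb{D}$ because $|1-\overline{a}I|$ stays between $1-|a|$ and $1+|a|$. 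Since invariant subspaces of $A_1^2$ are closed under multiplication by bounded holomorphic functions (as used in Lemma \ref{lem32}), multiplying by $m'\circ I$ and then by its inverse shows $[(m\circ I)'] = [I']$.

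For injectivity, suppose $I_1, I_2$ are inner with $[I_1'] = [I_2'] =: X$. By Liouville's theorem it suffices to prove that the pullback metrics agree, $\lambda_{I_1} = \lambda_{I_2}$, for then $I_1 = m \circ I_2$ with $m \in \aut(\mathbb{D})$. The crucial structural observation is that the interior zero divisor $\mathcal Z(H)$ of a generator is an invariant of $X$: since point and derivative evaluations are bounded on $A_1^2$, every $f \in [H]$ vanishes on $\mathcal Z(H)$ to at least the order of $H$, while $H \in [H]$ realizes $\mathcal Z(H)$ exactly. Hence $\mathcal Z(I_1') = \mathcal Z(I_2')$. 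Comparing $\Delta u_{H,\infty} = |H|^2 e^{2u_{H,\infty}}$ with $\Delta \log \lambda_{I_H} = \lambda_{I_H}^2 + 2\pi \sum_{c \in \crit(I_H)} \delta_c$ shows $\crit(I_H) = \mathcal Z(H)$; in particular $\crit(I_1) = \mathcal Z(I_1') = \mathcal Z(I_2') = \crit(I_2)$, so $I_1$ and $I_2$ carry the same critical divisor. Because of this coincidence, the function $u^{(2)} := \log \frac{1}{|I_1'|}\frac{2|I_2'|}{1-|I_2|^2}$ is an honest solution of $\GCE_{I_1'}$: the $\delta$-masses of $\log \lambda_{I_2}$ sit exactly at $\crit(I_2) = \mathcal Z(I_1')$ and are cancelled by $-\Delta \log|I_1'|$.

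It then remains to compare $u^{(2)}$ with the finite-data solutions $u_{I_1', n}$. Writing $u_{I_1',n} = \log \frac{1}{|I_1'|}\lambda_{K_n}$ for its Liouville map $K_n$, the weight $\log|I_1'|$ cancels and $u_{I_1', n} - u^{(2)} = \log(\lambda_{K_n}/\lambda_{I_2})$. Since $K_n$ has finite boundary data while $I_2$ is inner, $\lambda_{I_2}$ should dominate $\lambda_{K_n}$ near $\partial \mathbb{D}$, so this difference has nonpositive boundary limsup; the comparison principle then yields $u_{I_1', n} \le u^{(2)}$ throughout $\mathbb{D}$. Letting $n \to \infty$ gives $u_{I_1', \infty} \le u^{(2)}$, i.e. $\lambda_{I_1} \le \lambda_{I_2}$, and the symmetric argument (interchanging the two inner functions and working in $\GCE_{I_2'}$) gives $\lambda_{I_2} \le \lambda_{I_1}$. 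Thus $\lambda_{I_1} = \lambda_{I_2}$ and $I_1 \sim I_2$, which is injectivity. I expect this boundary comparison to be the main obstacle: one must make precise, in the weak-$*$ sense demanded by the comparison principle, that the conformal metric of the inner map $I_2$ overwhelms that of each finite-data map $K_n$ at the unit circle — equivalently, that $u^{(2)}$ dominates every solution with finite boundary data — and it is exactly here that the boundary behaviour of $\lambda_{K_n}$ against $\log|I_1'|$ has to be controlled carefully.
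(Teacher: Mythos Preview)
Your treatment of well-definedness is fine, and the observation that two generators of the same $1$-generated subspace of $A^2_1$ must have the same zero divisor is correct and pleasant (it even gives more than you need: for the comparison principle a supersolution suffices, and $\crit(I_2)\subseteq\mathcal Z(I_1')$ already follows from $I_2'\in[I_1']$). The construction of $u^{(2)}=\log\frac{1}{|I_1'|}\,\lambda_{I_2}$ as a solution of $\GCE_{I_1'}$ is also correct.

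The gap is exactly where you flag it. To apply the comparison principle to $u_{I_1',n}\le u^{(2)}$ you must show that $\limsup_{r\to1}\int \phi\bigl(u_{I_1',n}-u^{(2)}\bigr)(re^{i\theta})\,d\theta\le0$. Writing $u^{(2)}=\log\bigl|I_2'/I_1'\bigr|+\log\frac{2}{1-|I_2|^2}$, the second summand does tend to $+\infty$ a.e.\ because $I_2$ is inner, but the first summand is the log-modulus of a zero-free holomorphic function about which you know nothing near $\partial\mathbb{D}$: it need not be bounded below, need not lie in $L^1(\partial\mathbb{D})$, and can swamp the divergence of the second term on sets of positive measure. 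So the weak-$*$ boundary inequality is not established, and I do not see how to rescue it directly from the hypotheses.

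The paper sidesteps precisely this difficulty by the polynomial-approximation trick already used in Lemma~\ref{lem36}. From $[H_1]\subset[H_2]$ one picks polynomials $p_k$ (zero-free on $\partial\mathbb{D}$) with $H_2p_k\to H_1$ in $A^2_1$, and compares $u_{H_2p_k,\,n}$ with $u_{H_2,\,m}-\log|p_k|$; the latter is a supersolution of $\GCE_{H_2p_k}$ whose boundary value is $m-\log|p_k|$, and since $p_k$ is a polynomial this quantity is bounded and exceeds $n$ for $m$ large. Letting $m\to\infty$ and then $k\to\infty$ gives $u_{H_1,n}\le\log\frac{1}{|H_1|}\lambda_{I_{H_2}}$, hence $\lambda_{I_{H_1}}\le\lambda_{I_{H_2}}$. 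Applied with $H_j=I_j'$ (and using the explicit computation $I_{I'}=I$), equality of the subspaces forces $\lambda_{I_1}=\lambda_{I_2}$ and therefore $I_1\sim I_2$. In short, the missing idea in your argument is to trade the uncontrolled ratio $I_2'/I_1'$ for a polynomial $p_k$ whose boundary behaviour is trivially under control; this is exactly what the membership $I_2'\in[I_1']$ buys you.
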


\begin{proof}
 The proof of Lemma \ref{lem36} shows that if $[H_1] \subset [H_2]$, then
$$
\frac{|I'_{H_1}|}{1-|I_{H_1}|^2} \le \frac{|I'_{H_2}|}{1-|I_{H_2}|^2}.
$$
In particular, if $H_1$ and $H_2$ generate the same invariant subspace, then
$$
\frac{|I'_{H_1}|}{1-|I_{H_1}|^2} = \frac{|I'_{H_2}|}{1-|I_{H_2}|^2}.
$$
By Liouville's theorem, $I_{H_1} = I_{H_2}$ up to post-composition with an element of $\aut(\mathbb{D})$. The proof is complete.
\end{proof}
 
\subsection{\texorpdfstring{Why does $I'_n$ generate $[H]$?}{Why does I'\_n generate [H]?}}

\begin{proof}[Proof of Theorem \ref{bounded-generator-thm}]
For concreteness, we will show that $I'_0$ generates $[H]$ as the general case is similar.
Since the subharmonic function
$
u = \log \frac{1}{|H|} \frac{2 |I_0'|}{1-|I_0|^2}
$
has zero boundary data, its  minimal harmonic majorant is 0.
For $0 < r < 1$, define
$$
u_r = \log \frac{1}{|H_r|} \frac{2 |r I_0'|}{1-|r I_0|^2},
$$
where
 $$H_r := \frac{2r}{\phi_r}  \cdot I'_0\in [I'_0], \qquad  \phi_r(z) := \out_{1-|r I_0|^2}(z).$$
Here, $\phi_r$ is the outer function with absolute value $1-|r I_0|^2$ on the unit circle.
 By construction, the  minimal harmonic majorant of $u_r$ is also 0.
 
 Since the $|H_r|$ increase to $|H|$, after passing to a subsequence, the functions $H_r \to e^{i\theta} H$ converge in $A^2_1(\mathbb{D})$ for some $\theta \in [0, 2\pi)$. It follows that $H \in [I'_0]$ and therefore, $[H] = [I'_0]$.
\end{proof}

\subsection{Does $I'$ generate $[H]$?}
\label{sec:conj-proof}

We now deduce Conjecture \ref{main-conjecture} from a statement in PDE that looks intuitively plausible:

\begin{conjecture}
\label{conjecture2}
Any solution of $\Delta u =|I'|^2 e^{2u}$ that is $\le u_{I', \infty}$ can be approximated uniformly on compact subsets by solutions $u_k$ that are bounded above.
\end{conjecture}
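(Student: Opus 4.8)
The plan is to realize the given solution $u$ as an increasing limit of bounded-above solutions produced by truncating boundary data, in direct analogy with the very construction of the canonical solution $u_{I',\infty}$. First I would pass to the Liouville picture: write $u=\log\frac{1}{|I'|}\,\frac{2|F'|}{1-|F|^2}$ for a holomorphic self-map $F$, whose critical set must equal $\mathcal{Z}(I')=\crit I$ (this is forced because $u$ is a genuine solution of $\GCE_{I'}$, so the $\delta$-masses of $\Delta\log\lambda_F$ cancel those of $-\Delta\log|I'|$). In this picture the hypothesis $u\le u_{I',\infty}$ is exactly the Schwarz--Pick domination $\frac{|F'|}{1-|F|^2}\le\frac{|I'|}{1-|I|^2}$. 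The explicit bounded-above comparison solutions will be $\psi_r:=\log\frac{2r}{1-r^2|I|^2}$, the Liouville function of $rI$; since $\crit(rI)=\mathcal{Z}(I')$, the same cancellation of $\delta$-masses shows each $\psi_r$ solves $\GCE_{I'}$, is bounded above by $\log\frac{2r}{1-r^2}$, and increases to $u_{I',\infty}$ as $r\to1$.

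Next I would build the approximating family. Let $h$ denote the boundary data of $u$ (the weak-$*$ limit of $u(re^{i\theta})\,d\theta$, dominated by that of $u_{I',\infty}$), and for each $n$ let $u_n$ be the solution of $\GCE_{I'}$ with truncated boundary data $\min(h,n)$. By the monotonicity in Theorem \ref{kraus-PDE} the $u_n$ increase in $n$, and the comparison principle gives $u_n\le u$. Because the boundary data of $u_n$ is bounded above by $n$, comparison against $u_{I',n}$ (itself bounded above, being a smooth solution with finite boundary values) yields $u_n\le u_{I',n}$, so each $u_n$ is bounded above; these are precisely the bounded-above solutions whose Liouville maps have derivative in $[I']$ by Lemma \ref{lem32}. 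Letting $n\to\infty$ and using Kato's inequality together with interior elliptic regularity, the increasing limit $u_\infty:=\lim_n u_n$ is again a solution of $\GCE_{I'}$ with $u_\infty\le u$. It then remains to establish the reverse inequality $u_\infty\ge u$, for then $u_n\nearrow u$ and, by Dini's theorem, the convergence is locally uniform.

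The hard part, and the reason this remains conjectural, is exactly this last step: recovering a solution lying below the canonical one from its own truncations, which amounts to a uniqueness/maximum principle for $\GCE_{I'}$ with rough (measure) boundary data. Setting $w:=u-u_\infty\ge 0$, Kato's inequality gives $\Delta w^+\ge |I'|^2(e^{2u}-e^{2u_\infty})\,\chi_{\{u>u_\infty\}}\ge 0$, so $w^{+}$ is subharmonic with boundary data that vanishes in the truncated sense; but the usual maximum principle does \emph{not} force $w^+\equiv 0$, since the boundary data of $u$ need not be a finite measure near $\partial\mathbb{D}$. I expect that closing this gap requires quantifying how much mass the boundary behaviour of $u$ can concentrate, a Beurling--Carleson-type restriction reminiscent of Theorem \ref{main-thm}. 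The natural route is to test $w$ against the barriers $\psi_r$ and push $r\to 1$, controlling $\int_{|z|=r}(u-u_\infty)\,d\theta$ in the spirit of the nearly-maximal bound (\ref{eq:nm}); making that boundary estimate uniform in $r$ is the crux on which the entire argument turns.
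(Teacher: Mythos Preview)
The statement you are attempting is labelled \emph{Conjecture}~\ref{conjecture2} in the paper, and the paper does \emph{not} prove it; the authors explicitly remark that ``it turns out to be quite difficult and we can only verify it in special cases, for instance, when $I'$ lies in the Nevanlinna class.'' So there is no ``paper's own proof'' to compare against, and you are right to flag your argument as incomplete. Your identification of the obstruction---the failure of the comparison principle to force $u_\infty=u$ when the boundary data of $u$ is not an $L^\infty$ function---is exactly the point at which the paper stops as well.

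That said, your outline has a technical gap \emph{before} the step you call ``the hard part.'' You take for granted that an arbitrary solution $u\le u_{I',\infty}$ possesses boundary data $h$ in the weak-$*$ sense used in the paper, and then truncate via $\min(h,n)$. But the paper only establishes existence of weak-$*$ boundary values for the solutions it \emph{constructs} (those coming from Theorem~\ref{kraus-PDE} with $h\in L^\infty$, via the Green-potential representation); a generic solution of $\GCE_{I'}$ dominated by the canonical one need not have boundary values that are even a measure, let alone a function to which one can apply $\min(\,\cdot\,,n)$. You acknowledge this later (``the boundary data of $u$ need not be a finite measure''), but your definition of $u_n$ already presupposes it. A cleaner formulation would be to set $u_n$ to be the solution with boundary data $\min\bigl(u(r_n e^{i\theta}),n\bigr)$ on $|z|=r_n$ for $r_n\uparrow 1$ and try to extract a diagonal limit; but then both the monotonicity and the identification of the limit become delicate, and one is back to the same unresolved boundary-mass issue.
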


\begin{proof}[``Proof'' of Conjecture \ref{main-conjecture}]
Let $H \in A^2_1(\mathbb{D})$ and $I_0$ and $I$ be the Liouville functions of $u_{H,0}$ and $u_{H, \infty}$ respectively. By the above ``fact'' we can approximate
\begin{equation*}
u_k \, = \, \log \frac{1}{|I'|} \frac{|F_k'|}{1-|F_k|^2} \, \to \, \log \frac{1}{|I'|} \frac{|I_0'|}{1-|I_0|^2}.
\end{equation*}
Since the solutions $u_k$ are bounded above, each $F'_k \in [I']$. By Lemma \ref{lem33}, $I'_0 \in [I']$. Therefore, $[I'] = [H]$ as desired.
\end{proof}

\begin{remark}
Conjecture \ref{conjecture2} is easy to believe since one can approximate a harmonic function by bounded harmonic functions. However, it turns out to be quite difficult and we can only verify it in special cases, for instance, when $I'$ lies in Nevanlinna class.
\end{remark}

\subsection{When are canonical solutions maximal?}
\label{sec:can-max}

Since solutions of $\GCE_H$ are in bijection with solutions of (\ref{eq:GCE}), the maximal solution of $\GCE_H$ depends only on the zero set of $H$. A little thought shows that
$$
u_{H, \max} =  \log \frac{1}{|H|} \frac{|F_{\mathcal Z(H)}|}{1-|F_{\mathcal Z(H)}|^2},
$$
where $F_{\mathcal Z(H)}$ denotes the maximal Blaschke product whose critical set is the zero set of $H$.

\begin{lemma}
For $H \in A^2_1(\mathbb{D})$, the canonical and maximal solution coincide if and only if $I_H \in \MBP$ where $I_H$ is the Liouville map of the canonical solution associated to $H$.
\end{lemma}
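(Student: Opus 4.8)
The plan is to reduce the biconditional to the injectivity half of Liouville's theorem together with the defining property of $\MBP$. The starting observation is that both the canonical solution $u_{H,\infty}$ and the maximal solution $u_{H,\max}$ are genuine \emph{solutions} of $\GCE_H$ (not merely sub/supersolutions), so by Liouville's theorem each is of the form $\log\frac{\lambda_F}{|H|}$ for a holomorphic self-map $F$ of the disk, namely $F=I_H$ and $F=F_{\mathcal Z(H)}$ respectively. Since the maximal solution dominates every solution pointwise, $u_{H,\infty}\le u_{H,\max}$ always holds, so ``the two solutions coincide'' just means that the canonical solution is already maximal. Because $\lambda_F=F^*\lambda_{\mathbb D}$ is unchanged under post-composition with $\aut(\mathbb D)$, the injectivity of Liouville's correspondence shows that $u_{H,\infty}=u_{H,\max}$ if and only if $I_H=F_{\mathcal Z(H)}$ up to an element of $\aut(\mathbb D)$. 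Thus the lemma follows once I show that $I_H=F_{\mathcal Z(H)}$ is equivalent to $I_H\in\MBP$.

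One implication is immediate: $F_{\mathcal Z(H)}$ is a maximal Blaschke product by definition, so $I_H=F_{\mathcal Z(H)}$ gives $I_H\in\MBP$. For the converse, suppose $I_H\in\MBP$, say $I_H=F_C$ for some critical set $C$; the key step is to identify $C$. Writing $v=u_{H,\infty}+\log|H|=\log\lambda_{I_H}$ and using that $u_{H,\infty}$ is a genuine solution of $\GCE_H$, the weak equation together with $\Delta\log|H|=2\pi\sum_{c\in\mathcal Z(H)}\delta_c$ gives $\Delta v=e^{2v}+2\pi\sum_{c\in\mathcal Z(H)}\delta_c$. On the other hand, Liouville's theorem, equation (\ref{eq:basic-gce}), says $\Delta v=e^{2v}+2\pi\sum_{c\in\crit(I_H)}\delta_c$. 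Comparing the two distributional identities forces $\crit(I_H)=\mathcal Z(H)$ as divisors, so $C=\crit(F_C)=\crit(I_H)=\mathcal Z(H)$. Since the maximal Blaschke product with a prescribed critical set is unique up to $\aut(\mathbb D)$ (this is built into Kraus' bijection $\MBP/\aut(\mathbb D)=\{\text{zero-based subspaces of }A^2_1\}$), we conclude $I_H=F_{\mathcal Z(H)}$, completing the equivalence.

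The step that I expect to require the most care is the identification $\crit(I_H)=\mathcal Z(H)$, and in particular the assertion that $u_{H,\infty}$ is a bona fide solution of $\GCE_H$ with no spurious point masses. This is precisely what guarantees that the canonical Liouville map $I_H$ acquires exactly the critical points forced by the zeros of $H$ and no others; it rests on $u_{H,\infty}$ being the finite increasing limit of the honest solutions $u_{H,n}$, so that the weak formulation passes to the limit and $I_H$ is a genuine inner self-map rather than a degenerate object. Granting this, the remainder is bookkeeping with Liouville's correspondence and the uniqueness of $F_{\mathcal Z(H)}$.
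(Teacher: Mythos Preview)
Your argument is correct and follows the same route as the paper: both reduce the equality $u_{H,\infty}=u_{H,\max}$ to $\lambda_{I_H}=\lambda_{F_{\mathcal Z(H)}}$ and then invoke the injectivity of Liouville's correspondence to conclude $I_H=F_{\mathcal Z(H)}$ up to $\aut(\mathbb D)$. You are actually more careful than the paper in the converse direction, making explicit the identification $\crit(I_H)=\mathcal Z(H)$ (via the distributional form of $\GCE_H$) needed to pass from the bare hypothesis $I_H\in\MBP$ to $I_H=F_{\mathcal Z(H)}$; the paper leaves this step implicit.
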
 

\begin{proof}The canonical solution
$
u_{H, \infty} =  \log \frac{1}{|H|} \frac{|I'_H|}{1-|I_H|^2}
$
is equal to the maximal solution if and only if
$$
\frac{|I'_H|}{1-|I_H|^2} = \frac{|F'_{\mathcal Z(H)}|}{1-|F_{\mathcal Z(H)}|^2}.
$$
By Liovuille's theorem, this happens precisely when $I_H$ and $F_{\mathcal Z(H)}$ are related by an element of $\aut(\mathbb{D})$.
\end{proof}

\subsection{Boundary behaviour of canonical solutions}

In \cite{maximal-blaschke}, Kraus and Roth showed that a maximal Blaschke product $F_C$ extends analytically past any arc on the unit circle which does not meet the closure of $C$. We generalize their result to canonical solutions:

\begin{theorem}
If $H \in A^2_1(\mathbb{D})$ extends analytically past an open arc $J \subset \partial \mathbb{D}$, then the canonical solution $u_{H,\infty}(z) \to +\infty$ as $z$ approaches $J$. In particular, $I_H$ extends analytically though $J$.
\end{theorem}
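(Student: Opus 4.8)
The plan is to prove the sharp two-sided estimate $u_{H,\infty}(z) = \log\frac{1}{1-|z|} + O(1)$ as $z$ approaches $J$, and then read off the analytic continuation of $I_H$ from the boundary regularity of large solutions of the Gauss curvature equation, exactly as in \cite{maximal-blaschke}. First I would localize: fix a compact subarc $J'\Subset J$ and a one-sided neighborhood $\Omega\subset\mathbb{D}$ of $J'$ on which the analytic extension of $H$ is defined. Since $H$ is analytic and nonzero on $\overline{J'}$ away from its finitely many zeros there, I may assume $c\le|H|\le C$ on $\Omega$ (the zeros of $H$ only push $u_{H,\infty}$ up through the term $\log\frac{1}{|H|}$, so they help the blow-up and can be excised by small disks). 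Recall also that $\crit(I_H)=\mathcal Z(H)$ by Liouville's theorem, so $I_H'\ne0$ on $\Omega$ and $v:=u_{H,\infty}+\log|H|=\log\frac{2|I_H'|}{1-|I_H|^2}$ is an honest, mass-free solution of $\Delta v=e^{2v}$ there.

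The upper bound is soft: on $\Omega$ the canonical solution satisfies $\Delta u_{H,\infty}=|H|^2e^{2u_{H,\infty}}\ge c\,e^{2u_{H,\infty}}$, so the Keller--Osserman estimate for subsolutions of $\Delta w\ge c\,e^{2w}$ gives $u_{H,\infty}(z)\le\log\frac{1}{1-|z|}+O(1)$ near $J'$.

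The lower bound is the heart of the matter, and the delicate point is that the canonical solution is \emph{not} the maximal solution, so its blow-up cannot be extracted from the equation alone: one must feed in the finite-data solutions $u_{H,n}$, whose trace $n$ on $J'$ is the only available source of largeness. The difficulty in its starkest form is that any barrier which blows up on $J'$ necessarily has value $+\infty$ on the portion of its boundary shared with $J'$, where $u_{H,\infty}$ is not yet controlled, so a naive application of the comparison principle is circular. I would break the circularity by comparing inside the boundary layer. On $\Omega$ let $\underline u_n$ be the solution of the constant-coefficient equation $\Delta w=C e^{2w}$ with boundary data $n$ on the flat side $J'$ and a fixed very negative constant $m$ on the interior side $\Gamma=\partial\Omega\cap\mathbb{D}$. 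Since $C\ge|H|^2$, each $\underline u_n$ is a subsolution of $\GCE_H$, and since $\underline u_n=n=u_{H,n}$ on $J'$ while $\underline u_n=m\le u_{H,n}$ on $\Gamma$ (the values $u_{H,n}|_\Gamma$ are bounded below, uniformly in $n\ge0$, by $u_{H,0}|_\Gamma$), the comparison principle yields $\underline u_n\le u_{H,n}\le u_{H,\infty}$ on $\Omega$. Letting $n\to\infty$, the increasing limit $\underline u_\infty=\sup_n\underline u_n$ is the large solution of $\Delta w=C e^{2w}$ on $\Omega$ that blows up along $J'$; transplanting the explicit flat-boundary profile $\log\frac{1}{\sqrt{C}\,\eta}$ of the upper half-plane $\mathbb{H}$ (which solves $\Delta w=C e^{2w}$ and blows up on $\mathbb{R}$ at rate $\log\frac{1}{\eta}$) as a lower barrier shows $\underline u_\infty(z)\ge\log\frac{1}{1-|z|}-O(1)$. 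Hence $u_{H,\infty}\ge\underline u_\infty\to+\infty$ as $z\to J'$, which is the asserted blow-up.

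With the two-sided bound $u_{H,\infty}=\log\frac{1}{1-|z|}+O(1)$ in hand, the function $v=\log\frac{2|I_H'|}{1-|I_H|^2}$ is a large solution of $\Delta v=e^{2v}$ near $J'$ with the standard blow-up rate, i.e. the pullback metric $\lambda_{I_H}=\frac{2|I_H'|}{1-|I_H|^2}$ is comparable to the Poincar\'e metric $\lambda_{\mathbb{D}}$ up to a bounded factor. For the final clause I would invoke the boundary regularity of such large solutions established by Kraus and Roth \cite{maximal-blaschke}: their asymptotic analysis applies once the rate is known and, through Liouville's correspondence $v=\log\lambda_{I_H}$, produces a real-analytic expansion of $v$ up to $J'$; the two-sided metric comparison forces $|I_H|\to1$ and $I_H'\ne0$ on $J'$, so that $I_H$ extends continuously to $J'$ as a local diffeomorphism onto an arc of $\partial\mathbb{D}$ and then analytically by Schwarz reflection. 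The main obstacle is squarely the lower bound: everything downstream is an application of known large-solution theory, whereas the lower bound must reconcile the non-maximality of the canonical solution with boundary blow-up, which is why the finite-data solutions $u_{H,n}$ and the layerwise comparison above are indispensable.
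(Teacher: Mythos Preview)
Your approach is correct but takes a different, more laborious route than the paper. Both arguments hinge on the finite-data solutions $u_{H,n}$, but the paper bypasses the barrier construction entirely: from the fixed-point representation
\[
u_{H,n}(z) \;=\; n \;-\; \frac{1}{2\pi}\int_{\mathbb{D}} e^{2u_{H,n}(\zeta)}\,|H(\zeta)|^2\,G(z,\zeta)\,|d\zeta|^2,
\]
one observes that the Green potential tends to $0$ at any $\zeta_0\in J$ (the density $e^{2u_{H,n}}|H|^2\le e^{2n}\sup_{\text{near }J}|H|^2$ is locally bounded because $H$ extends analytically, and the far contribution is $O(1-|z|)$ since the measure is Blaschke). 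This gives $\liminf_{z\to\zeta_0}u_{H,n}(z)\ge n$ directly, and monotonicity in $n$ finishes the blow-up. No auxiliary domain, no constant-coefficient comparison equation, and no rate estimate are needed. For the analytic extension the paper cites \cite[Theorem 1.1]{KRR}, which requires only $\log\frac{|I'|}{1-|I|^2}\to+\infty$ on $J$, so your Keller--Osserman upper bound and the two-sided rate are superfluous.

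What your approach buys is a sharper conclusion---the precise asymptotic $u_{H,\infty}=\log\frac{1}{1-|z|}+O(1)$ along $J'$---at the cost of setting up and solving an auxiliary boundary value problem on $\Omega$ and verifying the comparison principle there with mixed boundary (weak-$*$ on $J'$, classical on $\Gamma$). Two minor points: (i) for the lower barrier you only need the upper bound $|H|\le C$ (so excising zeros of $H$ on $J$ is unnecessary for the blow-up; the lower bound $|H|\ge c$ is used only for the upper estimate, which in turn is not needed once you cite \cite{KRR} instead of \cite{maximal-blaschke}); (ii) invoking \cite{maximal-blaschke} for the final step is a detour, since their analytic extension ultimately rests on the same reflection principle from \cite{KRR} that the paper cites directly.
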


\begin{proof}
In the proof of Theorem \ref{kraus-PDE}, we saw that
$$ u_n(z) = n - \frac{e^{2n}}{2\pi} \int_{\mathbb{D}}  |H(\zeta)|^2 G(z, \zeta) |d\zeta|^2.$$
From this representation, it follows that for any $\zeta \in J$,
$$
\liminf_{z \to \zeta} u_{H, n}(z) \ge n.
$$
Since the $u_{H,n}$ are increasing in $n$,
$$
\lim_{z \to \zeta} u_{H, \infty}(z) = \infty.
$$
Since $H(z)$ is bounded near $\zeta$, 
$$
\log \frac{|I'(z)|}{1-|I(z)|^2}
$$
tends to $+\infty$ as $z \to \zeta$. The theorem now follows from \cite[Theorem 1.1]{KRR}.
\end{proof}

\section{Further remarks and open problems}

We conclude with some remarks and open problems:

\begin{enumerate}
\item In the theory of Bergman spaces, one learns that any 1-generated invariant subspace $X \subset A^2_1$ can be generated by a Bergman inner function $\varphi = \varphi_X$, which solves a certain extremal problem. According to  \cite[Theorem 3.3]{HKZ},  $\varphi$ is the derivative of a BMO function. Since one expects $\varphi$ to be the smoothest function in $X$, it is natural to wonder if $\varphi$ is actually the derivative of a bounded function. 

\item Can one give an alternative proof of Shimorin's result on asymptotic spectral synthesis \cite{shimorin} in $A^2_1$ using the methods of this paper? That is, to show that one can approximate any 1-generated invariant subspace $X \subset A^2_1$ by zero-based ones.

\item For $\alpha > -1$, the weighted Bergman space $
A_\alpha^p$ consists of functions for which
 \begin{equation*}
 \|f\|_{A^p_\alpha} = \biggl (  \int_{\mathbb{D}} |f(z)|^p  (1-|z|)^\alpha |dz|^2 \biggr)^{1/p} < \infty.
\end{equation*}
When one does not write the subscript, one takes $\alpha = 0$. It is well known that the zero sets of all Bergman spaces $A^p$ are different.  Density conditions are known to separate zero sets of $A^p_\alpha$ and $A^q_\beta$ if $\frac{\alpha+1}{p} \ne \frac{\beta+1}{q}$, see \cite{HKZ}.
Do $A^p_{\alpha}$ and $A^q_{\beta}$ have the same zero sets if $\frac{\alpha+1}{p} = \frac{\beta+1}{q}$?

\end{enumerate}


\bibliographystyle{amsplain}

\begin{thebibliography}{00}


\bibitem{dyakonov-mobius}K.~M.~Dyakonov, {\em A characterization of M\"obius transformations}\/, C. R. Math. Acad. Sci. Paris 352 (2014), no. 2, 593--595.

\bibitem{dyakonov-inner}K.~M.~Dyakonov, {\em Inner functions and inner factors of their derivatives}\/,  Integr. Equ. Oper. Theory 82 (2015), no. 2, 151--155.

\bibitem{HKZ} H.~Hedenmalm, B.~Korenblum, K.~Zhu, {\em Theory of Bergman spaces}\/, Graduate Texts in Mathematics 199, Springer-Verlag, 2000.

\bibitem{heins} M.~Heins, {\em On a class of conformal metrics}\/, Nagoya Math. J. 21 (1962), 1--60.


\bibitem{inner} O.~Ivrii, {\em Prescribing inner parts of derivatives of inner functions}\/, J. d'Analyse Math. 139 (2019), 495--519.

\bibitem{kraus}D.~Kraus, {\em Critical sets of bounded analytic functions, zero sets of Bergman spaces and nonpositive curvature}\/, Proc. London Math. Soc. 106 (2013), no. 4, 931--956.

\bibitem{conf-metrics}D.~Kraus, O.~Roth, {\em Conformal metrics}\/, Lecture Notes Ramanujan Math. Society, Lecture Notes Series 19 (2013), 41--83.

\bibitem{maximal-blaschke}D.~Kraus, O.~Roth, {\em Maximal Blaschke Products}\/, Adv. Math. 241 (2013), 58--78.

\bibitem{KRR} D.~Kraus, O.~Roth and S.~Ruscheweyh, {\em A boundary version of Ahlfors? Lemma, locally complete conformal metrics and conformally invariant reflection principles for analytic maps}\/, J. Anal. Math. 101 (2007), 219--256.

\bibitem{nikolskii}N.~K.~Nikol'skii, {\em Treatise on the Shift Operator}\/, Grundlehren der mathematischen Wissenschaften 273, Springer-Verlag, Berlin, 1986.

\bibitem{ponce-book}A.~C.~Ponce, {\em Elliptic PDEs, Measures and Capacities: From the Poisson Equation to Nonlinear Thomas--Fermi Problems}\/, EMS Tracts in Mathematics 23, 2016.

\bibitem{shimorin} S.~M.~Shimorin, {\em Approximate spectral synthesis in the Bergman space}\/, Duke Math. J. 101 (2000), no. 1, 1--39.

\end{thebibliography}

\end{document}